\DeclareMathOperator{\Cone}{Cone} 
\DeclareMathOperator{\Ker}{Ker}
\DeclareMathOperator{\Hom}{Hom} 
\DeclareMathOperator{\odd}{odd} \DeclareMathOperator{\SU}{SU}
\DeclareMathOperator{\Pe}{Pe} \DeclareMathOperator{\relint}{rel int}
\DeclareMathOperator{\conv}{convhull} \DeclareMathOperator{\Imm}{Im}
\newcommand{\simc}{\!\!\sim}
\newcommand{\Zo}{\mathbb{Z}}
\newcommand{\Ro}{\mathbb{R}}
\newcommand{\Rg}{\mathbb{R}_{\geqslant 0}}
\newcommand{\Co}{\mathbb{C}}
\newcommand{\Qo}{\mathbb{Q}}
\newcommand{\Ynon}{Y_{nsp}}
\newcommand{\Znon}{Z_{nsp}}
\newcommand{\Kspec}{K_{sp}}
\newcommand{\wh}[1]{{\widehat{#1}}}
\newcommand{\Hr}{\widetilde{H}}
\newcommand{\dd}{\partial}
\newcommand{\F}{\mathcal{F}}
\newcommand{\CP}{\mathbb{C}P}
\newcommand{\HP}{\mathbb{H}P}
\newcounter{stmcounter}[section]
\newcounter{thcounter}
\numberwithin{equation}{section}
\theoremstyle{plain}
\newtheorem{thm}[thcounter]{Theorem}
\newtheorem{prop}[stmcounter]{Proposition}
\newtheorem{lem}[stmcounter]{Lemma}
\newtheorem{clai}[stmcounter]{Claim}
\theoremstyle{definition}
\newtheorem{defin}[stmcounter]{Definition}
\theoremstyle{remark}
\newtheorem{rem}[stmcounter]{Remark}
\newtheorem{con}[stmcounter]{Construction}
\begin{document}

\title{Torus actions of complexity one in non-general position}

\author{Anton Ayzenberg}
\address{Faculty of computer science, Higher School of Economics}
\email{ayzenberga@gmail.com}

\author{Vladislav Cherepanov}
\address{Faculty of computer science, Higher School of Economics}
\email{vilamsenton@gmail.com}

\date{\today}
\thanks{This work is supported by the Russian Science Foundation under grant 18-71-00009.}

\subjclass[2010]{Primary 57S25, 52B11, 14M25, 55P25, 57N65; Secondary 55U30, 55R10, 55P40, 55P10, 05E45}

\keywords{torus action, complexity one, orbit space, equivariant formality, Alexander duality, toric variety, permutohedron, permutohedral variety, projective line bundle}

\begin{abstract}
Let the compact torus $T^{n-1}$ act on a smooth compact manifold $X^{2n}$ effectively with nonempty finite set of fixed points. We pose the question: what can be said about the orbit space $X^{2n}/T^{n-1}$ if the action is cohomologically equivariantly formal (which essentially means that $H^{\odd}(X^{2n};\Zo)=0$). It happens that homology of the orbit space can be arbitrary in degrees $3$ and higher. For any finite simplicial complex $L$ we construct an equivariantly formal manifold $X^{2n}$ such that $X^{2n}/T^{n-1}$ is homotopy equivalent to $\Sigma^3L$. The constructed manifold $X^{2n}$ is the total space of the projective line bundle over the permutohedral variety hence the action on $X^{2n}$ is Hamiltonian and cohomologically equivariantly formal. We introduce the notion of the action in $j$-general position and prove that, for any simplicial complex $M$, there exists an equivariantly formal action of complexity one in $j$-general position such that its orbit space is homotopy equivalent to~$\Sigma^{j+2}M$.
\end{abstract}

\maketitle

\section{Introduction}\label{secIntro}

Let the compact torus $T=T^k$ act effectively on a connected closed smooth manifold $X=X^{2n}$, and the action has nonempty finite set of fixed points. The number $n-k$ is called the \emph{complexity} of the action. In this paper we focus on the actions of complexity one, that is the actions of $T^{n-1}$ on $X=X^{2n}$. For an action of complexity one and a fixed point $x\in X^T$, consider $\alpha_{x,1},\ldots,\alpha_{x,n}\in \Hom(T^{n-1},T^1)\cong \Zo^{n-1}$, the weights of the tangent representation at $x$. We say that the weights are \emph{in general position} at $x\in X^T$ if any $n-1$ of $\{\alpha_{x,i}\}$ are linearly independent over $\Qo$.

The study of orbit spaces of complexity zero actions is well developed in toric topology \cite{DJ,BPnew,MasPan}. In \cite{BT}, Buchstaber and Terzic initiated the study of orbit spaces for actions of positive complexity. In particular, they proved the homeomorphisms $G_{4,2}/T^3\cong S^5$ and $F_3/T^2\cong S^4$ for the Grassmann manifold $G_{4,2}$ of complex $2$-planes in $\Co^4$ and the manifold $F_3$ of complete complex flags in $\Co^3$. It was later proved in \cite{AyzHP} that $\HP^2/T^3\cong S^5$ and $S^6/T^2\cong S^4$ for the quaternionic projective plane $\HP^2$ and the sphere $S^6$, considered as the homogeneous space $G_2/\SU(3)$.

It is easy to prove that, under certain technical assumptions \cite{AyzCompl}, the orbit space $Q^{n+1}=X^{2n}/T^{n-1}$ is a closed topological manifold if the tangent weights are in general position at each fixed point. If at least one fixed point has weights not in general position, then the orbit space $Q^{n+1}$ is a manifold with boundary~\cite{Cher}. Karshon and Tolman~\cite{KTmain} proved that the orbit space of any Hamiltonian torus action of complexity one in general position is homeomorphic to the sphere $S^{n+1}$. This includes the cases $G_{4,2}$ and $F_3$ but not~$\HP^2$ and~$S^6$.

We see that in complexity one, general position of tangent weights implies strong topological constraints on the structure of the orbit space. However, the second author studied the complexity one torus actions on regular semisimple Hessenberg varieties in \cite{Cher}: these actions are not in general position and their orbit spaces have more interesting topology. One example of the orbit space of a Hessenberg variety is the 5-sphere with four discs cut off, and another example is the 6-sphere with a tubular neighbourhood of a graph cut off. These examples made us think that the topology of orbit spaces in non-general position deserve further study and motivated our work.

In this paper, we show that dropping the assumption of general position, the orbit spaces may be almost arbitrary even if the action itself is cohomologically equivariantly formal. Recall that the action is called cohomologically equivariantly formal if its Serre spectral sequence\footnote{All coefficients are in $\Zo$ unless stated otherwise.}
\begin{equation}\label{eqSerreSeq}
E_2^{*,*}\cong H^*(BT)\otimes H^*(X)\Rightarrow H^*(X\times_TET)=H^*_T(X),
\end{equation}
collapses at $E_2$-term. It can be easily seen that the condition $H^{\odd}(X)=0$ implies equivariant formality. On the other hand, if $H^*(X)$ is $\Zo$-torsion free, then cohomological equivariant formality implies that $H^*_T(X)$ is a free $H^*(BT)$-module. If, moreover, the set of fixed points of the $T$-action is nonempty and finite, then $H^{\odd}(X)=0$, see \cite[Lm.2.1]{MasPan}. We prove the following

\begin{thm}\label{thmOrbitAnything}
For any finite simplicial complex $L$, there exists a closed smooth manifold $X_{\hat{L}}^{2n}$ with $H^{\odd}(X_{\hat{L}}^{2n})=0$, and the effective action of $T^{n-1}$ on $X_{\hat{L}}^{2n}$ with isolated fixed points and connected stabilizers such that the orbit space $Q^{n+1}=X_{\hat{L}}^{2n}/T^{n-1}$ satisfies
\begin{equation}\label{eqMainHomologyIso}
\Hr_{i+3}(Q^{n+1})\cong \Hr_i(L) \mbox{ for any } i\geqslant 0,
\end{equation}
and $\Hr_i(Q^{n+1})=0$ for $i=0,1,2$. 
\end{thm}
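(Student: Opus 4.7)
The plan is to realise $X_{\hat L}^{2n}$ as a projective line bundle over the permutohedral variety. Let $\mathcal{P}_n$ denote the smooth projective toric variety whose moment polytope is the permutohedron $P^{n-1}$, with its canonical complexity-zero $T^{n-1}$-action. For a given simplicial complex $L$, one builds a $T^{n-1}$-equivariant line bundle $\Lambda_{\hat L}$ over $\mathcal{P}_n$ prescribed by piecewise-linear data on the normal fan of $P^{n-1}$ that encodes the combinatorics of $L$, and sets $X_{\hat L}^{2n}=\mathbb{P}(\Lambda_{\hat L}\oplus \underline{\Co})$. The $T^{n-1}$-action lifts canonically to the projectivisation via the equivariant structure of the bundle; the resulting action is effective, of complexity one, and has isolated fixed points --- exactly the two points (zero- and infinity-sections) above each torus-fixed point of $\mathcal{P}_n$ --- with connected stabilisers provided that the characters by which isotropy subtori act on the $\CP^1$-fibers are chosen primitive.

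Cohomological equivariant formality is easy to dispatch. Since $\mathcal{P}_n$ is a smooth projective toric variety, $H^{\odd}(\mathcal{P}_n)=0$, and the Leray--Hirsch theorem for the $\CP^1$-bundle $X_{\hat L}^{2n}\to \mathcal{P}_n$ yields an additive isomorphism $H^*(X_{\hat L}^{2n})\cong H^*(\mathcal{P}_n)\otimes H^*(\CP^1)$, so $H^{\odd}(X_{\hat L}^{2n})=0$ as well, which in turn implies equivariant formality by the criterion recalled above.

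The main work lies in identifying the orbit space $Q^{n+1}=X_{\hat L}^{2n}/T^{n-1}$. The plan is to analyse it via its natural projection onto $\mathcal{P}_n/T^{n-1}=P^{n-1}$: over the relative interior of a face $F\subset P^{n-1}$ the fiber of $Q^{n+1}\to P^{n-1}$ is either a $2$-sphere (when the isotropy subtorus along $F$ acts trivially on the projective-line fiber) or the closed interval $\CP^1/S^1$ (when it rotates the fiber). The line bundle $\Lambda_{\hat L}$ is tuned so that the resulting ``interval locus'' inside $\partial P^{n-1}$ realises a subcomplex combinatorially equivalent to~$L$. Excising a regular neighbourhood of this interval locus yields a Mayer--Vietoris decomposition of $Q^{n+1}$ in which the complement is homologically trivial --- locally it resembles the sphere orbit space of Karshon--Tolman's general-position model --- while the neighbourhood contributes the reduced homology of the special subcomplex shifted up by three. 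Two of the suspensions come from the $S^2\simeq \CP^1$ fiber that collapses to an interval along the special locus, and the third from the normal direction to that locus inside $P^{n-1}$. The principal obstacle will be making the combinatorial translation $L\leftrightarrow \hat L$ precise enough that this Mayer--Vietoris argument cleanly produces both the shift $\Hr_{i+3}(Q^{n+1})\cong \Hr_i(L)$ for $i\geqslant 0$ and the vanishing of $\Hr_i(Q^{n+1})$ in degrees $i\in\{0,1,2\}$.
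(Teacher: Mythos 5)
Your construction is essentially the paper's manifold (the paper builds it as a quasitoric manifold over $\Pe^{n-1}\times I^1$ and only afterwards identifies it with $\mathbb{P}(\xi_{\hat L'}\oplus\underline{\Co})$ over the permutohedral variety), and the Leray--Hirsch argument for $H^{\odd}=0$ is fine. But the heart of the theorem --- the identification of the homology of $Q^{n+1}$ --- is where your sketch breaks down, and the Mayer--Vietoris bookkeeping you propose is wrong as stated. First, the complement of the neighbourhood of the ``interval locus'' is not homologically trivial: its fibers are genuine $\CP^1$'s, and already in the general-position case $L=\dd\Delta^{n-1}$, $\hat L=\varnothing$ (where $Q\cong S^{n+1}$) the decomposition reads $U\simeq S^{n-2}$, $V\simeq S^2$, $U\cap V\simeq S^{n-2}\times S^2$, so that the fundamental class of $Q$ arises only from the connecting homomorphism, not from either piece. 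Second, the neighbourhood of the interval locus has contractible (interval) fibers, hence is homotopy equivalent to the locus itself --- a polyhedral subcomplex of $\dd\Pe^{n-1}$ that is merely homotopy equivalent (not combinatorially equal) to $L$, with \emph{no} degree shift. The shift by three is a global phenomenon recording how the special data sits inside the boundary sphere; extracting it requires an Alexander-duality-type input, which is exactly what the paper supplies: it shows $Q\simeq\Znon=(\Ynon\times D^2)\cup(P^n\times\dd D^2)\subset\dd(P^n\times D^2)\cong S^{n+1}$ by collapsing contractible fibers (Smale's theorem), applies geometric Alexander duality between $\Znon$ and $\Kspec=\hat L'$ in $S^{n+1}$, and then combinatorial Alexander duality between $\hat L$ and $L$ in $(\dd\Delta^{n-1})'$. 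Your closing admission that the ``combinatorial translation $L\leftrightarrow\hat L$'' is the principal obstacle is precisely the missing content, not a technicality.

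A second genuine gap: isolated fixed points and connected stabilizers are not automatic for the lifted action on $\mathbb{P}(\Lambda_{\hat L}\oplus\underline{\Co})$. If the isotropy character on the fiber over some base fixed point is trivial, an entire $\CP^1$ is fixed; ruling this out amounts to requiring that at every vertex of $\Pe^{n-1}$ (i.e.\ every maximal chain) not all incident rays lie in the support of the divisor, equivalently $\dim\hat L\leqslant n-3$, which is where the hypothesis that $L$ has no ghost vertices enters. The same dimension bound is what forces $\dim\Kspec\leqslant n-3$ and hence the vanishing $\Hr_i(Q^{n+1})=0$ for $i=0,1,2$; your proposal never identifies this condition, so neither the isolatedness claim nor the low-degree vanishing is actually established.
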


We mention the particular case of Theorem~\ref{thmOrbitAnything} when $L$ is the boundary $\dd\Delta^{n-1}$ of the simplex on $n$ vertices. In this case, the theorem asserts that $Q^{n+1}$ is a homology $(n+1)$-sphere. By the discussion above, there exist a large number of actions of complexity one, whose orbit space is homeomorphic to the sphere: they all correspond to weights in general position. If the weights are not in general position, the orbit space is an $(n+1)$-manifold with boundary, according to \cite{Cher}, so we cannot get $H_{n+1}(Q^{n+1})\cong\Zo$. Therefore, the case $L=\dd\Delta^{n-1}$ is exceptional in the sense that it corresponds to the general position of weights. However, this case is also covered by the proof of Theorem~\ref{thmOrbitAnything}.

The paper is organized as follows. In Section~\ref{secPrelims} we review the basic definitions and constructions needed for our arguments. Theorem~\ref{thmOrbitAnything} is proved in Section~\ref{secMainProof}: the main ingredient is the Alexander duality; we apply it twice to get the isomorphism~\eqref{eqMainHomologyIso}. Additional details about the space $X_{\hat{L}}^{2n}$, constructed in the proof of Theorem~\ref{thmOrbitAnything}, are gathered in last Section~\ref{secAlgGeom}. In Proposition~\ref{propProjBdl}, we show that $X_{\hat{L}}^{2n}$ is the total space of a projectivized line bundle over the permutohedral toric variety. Therefore, $X_{\hat{L}}^{2n}$ is a smooth projective toric variety, in particular, the torus action on $X_{\hat{L}}^{2n}$ is Hamiltonian and equivariantly formal. Next, in Proposition~\ref{propHomotopyTriple}, we prove that the orbit space $Q^{n+1}=X_{\hat{L}}^{2n}/T^{n-1}$ is actually homotopy equivalent to the triple suspension $\Sigma^3L$. Finally, in Section~\ref{secAlgGeom} we introduce the notion of a complexity one action in $j$-general position. In Theorem~\ref{thmJGeneralOrbit}, we prove that homology of the orbit space of a complexity one torus action in $j$-general position can be arbitrary in degrees $j+2$ and higher. Homology in degrees below $j+2$ vanish in our examples of $j$-general actions.

\section{Preliminaries}\label{secPrelims}

In this section, we recall the standard definitions of a locally standard torus action and a quasitoric manifold. Quasitoric manifolds were introduced in the seminal work of Davis and Januszkiewicz \cite{DJ} as a topological generalization of smooth projective toric varieties.

A smooth manifold $X^{2n}$ with an effective action of $T^n$ is called \emph{locally standard} if $X^{2n}$ has an atlas of $T^n$-invariant charts, each equivalent to an open $T^n$-invariant subset of the standard action of $T^n$ on $\Co^n\cong \Ro^{2n}$, up to some automorphism of torus. Since $\Co^n/T^n\cong \Rg^n$, the orbit space $P^n=X^{2n}/T^n$ has the natural structure of a manifold with corners. The vertices of $P^n$ correspond to the fixed points of the action.

A manifold $X^{2n}$ with a locally standard action of $T^n$ is called a \emph{quasitoric manifold} if the orbit space $P^n$ is diffeomorphic to a simple polytope as a manifold with corners. Recall that an $n$-dimensional polytope is called \emph{simple} if each of its vertices is contained in exactly $n$ facets. The same condition holds for manifolds with corners. In the following we only work with quasitoric manifolds, although some definitions below are naturally valid for more general locally standard torus actions.

Let $\{\F_1,\ldots,\F_m\}$ be the set of all facets (i.e. faces of codimension 1) of the orbit space $P$. For each facet $\F_i$ consider the subgroup $\lambda(\F_i)\subset T^n$ which stabilizes an orbit lying in the interior of $F_i$. Since the action is locally standard, $\lambda(\F_i)$ is a circle subgroup of $T^n$. Hence we may assume that $\lambda$ takes values in the lattice $\Zo^n\cong \Hom(T^1,T^n)$ of 1-dimensional subgroups of $T^n$:
\begin{equation}\label{eqCharFuncDef}
\lambda\colon \{\F_1,\ldots,\F_m\}\to \Hom(T^1,T^n)\cong\Zo^n.
\end{equation}
It should be noticed that the value of $\lambda$ is determined up to sign unless some omniorientation is imposed on $X^{2n}$, see details in \cite{BPnew}. The function $\lambda$ is called a \emph{characteristic function} of the manifold $X^{2n}$. The condition of a locally standard action implies that whenever distinct facets $\F_{i_1},\ldots,\F_{i_n}$ intersect at a vertex, the values $\lambda(\F_{i_1}),\ldots,\lambda(\F_{i_n})$ form the basis of the lattice $\Zo^n$. This condition is called the \emph{$(*)$-condition}. Hence, with any quasitoric manifold $X^{2n}$, one can associate the characteristic pair $(P^n,\lambda)$ consisting of the simple polytope $P^n$ and the characteristic function \eqref{eqCharFuncDef}.

It is possible to reconstruct any quasitoric manifold $X^{2n}$ from its characteristic pair $(P^n,\lambda)$. Given a simple polytope $P^n$ and a function \eqref{eqCharFuncDef} satisfying the $(*)$-condition, consider the reduced space
\begin{equation}\label{eqReducedSpaceQToric}
X^{2n}_{(P,\lambda)}=(P^n\times T^n)/\simc
\end{equation}
where the identification $\sim$ is generated by the equivalences of the form $(x_1,t_1)\sim(x_2,t_2)$ if $x_1=x_2\in \F_i$ and $t_1t_2^{-1}\in \lambda(\F_i)$. Then $X^{2n}_{(P,\lambda)}$ is a topological manifold carrying the action of $T^n$; it is equivariantly homeomorphic to the original manifold $X^{2n}$, see \cite{DJ}. A different approach was developed in \cite{BPR} to construct a smooth model $X^{2n}_{(P,\lambda)}$ of a quasitoric manifold.

Quasitoric manifolds $X^{2n}$ provide examples of toric actions of complexity zero. Davis and Januszkiewicz \cite{DJ} proved that $H^{\odd}(X^{2n})=0$. This means that quasitoric manifolds are cohomologically equivariantly formal. On the other hand, their orbit spaces are polytopes by definition. This setting was further extended by Masuda and Panov in \cite{MasPan}: they gave a criterion, in terms of the orbit space, for the equivariant formality of a complexity zero torus action. In particular, it follows from their work that, for any equivariantly formal torus action of complexity zero with nonempty finite set of fixed points, the orbit space is a homology disc.

Our current work was motivated by a similar question for the actions of complexity one, that are the actions of $T^{n-1}$ on $2n$-manifolds. Is it possible to characterize equivariant formality in terms of the topology of the orbit space? The results of \cite{Cher} suggested that the answer is negative: the complexity one torus actions on regular semisimple Hessenberg varieties are equivariantly formal, but they have orbit spaces with nontrivial topology.

A natural way to construct many complexity one actions is to take a complexity zero action of $T^n$ on a quasitoric manifold $X^{2n}$ and consider the induced action of some subtorus $T^{n-1}\subset T^n$. Examples of such actions were considered in \cite{AyzCompl}. It was proved that whenever the induced action of $T^{n-1}$ is in general position (recall the definition in Section~\ref{secIntro}) then the orbit space $X^{2n}/T^{n-1}$ is homeomorphic to a sphere. In the current paper, we concentrate on the situation when the induced action of $T^{n-1}$ on a quasitoric manifold $X^{2n}$ is not in general position. However, our arguments include the case of general position and allow to recover the result of~\cite{AyzCompl}.

%

We now introduce some general notation to work with the fixed points of locally standard actions. Let $X^{2n}$ be a quasitoric manifold and $x\in X^{2n}$ be a fixed point. Let $N$ denote the lattice of 1-dimensional subgroups, $N=\Hom(T^1,T^n)$. Let $\lambda_1,\ldots,\lambda_n\in \Hom(T^1,T^n)=N$ be the characteristic vectors of the $T^n$-action at $x$, and $\wh{\alpha}_1,\ldots,\wh{\alpha}_n\in \Hom(T^n,T^1)=N^*$ be the weights of its tangent representation. It is assumed that the $n$ facets of $P^n=X^{2n}/T^n$ adjacent to $x$ are enumerated from $1$ to $n$ and $\lambda_i$ is the value of characteristic function at $i$-th facet. Given some enumeration of facets around $x$, we can enumerate the edges of $P^n$ adjacent to $x$ in a canonical way: the $i$-th edge is the only one which is not contained in the $i$-th facet. Then $\wh{\alpha}_i$ is the weight corresponding to $i$-th edge. With this convention, there holds $\langle \lambda_i,\wh{\alpha}_j\rangle = \delta_{ij}$. 

Given a subtorus $T^{n-1}\subset T^n$, we get a short exact sequence of tori
\[
T^{n-1}\stackrel{i}{\longrightarrow} T^n\stackrel{p}{\longrightarrow} T^n/T^{n-1}\cong T^1.
\]
It induces the exact sequences of lattices
\[
\Hom(T^1,T^{n-1})\stackrel{i_*}{\longrightarrow} \Hom(T^1,T^n)\stackrel{p_*}{\longrightarrow} \Hom(T^1,T^n/T^{n-1})\cong \Zo,
\]
\[
\Zo\cong \Hom(T^n/T^{n-1},T^1)\stackrel{p^*}{\longrightarrow}\Hom(T^n,T^1)\stackrel{i^*}{\longrightarrow} \Hom(T^{n-1},T^1).
\]
The hyperplane sublattice $\Ker p_*\subset N=\Hom(T^1,T^n)$ will play an important role in our arguments. We denote it by $\Pi$:
\begin{equation}\label{eqDefHyperplanePi}
\Pi=\Ker (p_*\colon N\to \Zo)=\Imm(i_*\colon \Hom(T^1,T^{n-1})\to N).
\end{equation}

\begin{prop}[\cite{AyzCompl}]
The restricted action of $T^{n-1}$ on a quasitoric manifold $X^{2n}$ is in general position at a fixed point $x$ if and only if $p_*(\lambda_j)\neq 0$ for any characteristic value $\lambda_j$ at $x$ (which means that all characteristic vectors $\lambda_j$ do not lie in $\Pi$). The restricted action is in general position globally if and only if all characteristic values $\lambda(\F_j)$ do not lie in $\Pi$.
\end{prop}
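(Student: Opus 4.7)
The plan is to reduce the general position condition at a fixed point $x$ to an explicit computation in the dual bases at $x$. Since the ambient $T^n$-action is locally standard, the tangent weights $\wh{\alpha}_1,\ldots,\wh{\alpha}_n\in N^*$ at $x$ form the $\Zo$-basis of $N^*$ dual to the characteristic vectors $\lambda_1,\ldots,\lambda_n\in N$, as recorded by $\langle \lambda_i,\wh{\alpha}_j\rangle=\delta_{ij}$. Restricting the action to $T^{n-1}$ replaces the weights by $\alpha_j:=i^*(\wh{\alpha}_j)\in \Hom(T^{n-1},T^1)$; since $i^*$ is surjective onto this rank-$(n-1)$ lattice, the images $\alpha_1,\ldots,\alpha_n$ span it and so satisfy a nontrivial $\Qo$-linear relation, unique up to scalar.

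The key step is to compute the coefficients of this relation in terms of the $\lambda_j$. A relation $\sum_j c_j\alpha_j=0$ is equivalent to $\sum_j c_j\wh{\alpha}_j\in \Ker i^*=\im p^*$, hence $\sum_j c_j\wh{\alpha}_j=c\cdot p^*(1)$ for some scalar $c$. Pairing both sides with $\lambda_i$ and using the adjunction $\langle \lambda_i,p^*(1)\rangle=\langle p_*(\lambda_i),1\rangle=p_*(\lambda_i)$ yields
\[
c_i = c\cdot p_*(\lambda_i).
\]
Thus, up to an overall scalar, the only $\Qo$-linear dependence among $\alpha_1,\ldots,\alpha_n$ has coefficients $p_*(\lambda_1),\ldots,p_*(\lambda_n)$.

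From this the local equivalence follows at once: omitting a single $\alpha_k$ leaves a $\Qo$-linearly independent $(n-1)$-tuple if and only if the unique relation cannot survive the omission, i.e.\ $c_k\neq 0$, which by the identity above is equivalent to $p_*(\lambda_k)\neq 0$. Hence general position at $x$ is equivalent to no $\lambda_j$ lying in the hyperplane $\Pi=\Ker p_*$. For the global statement, every facet $\F_j$ of the simple polytope $P^n$ contains at least one vertex, and the local characteristic vectors at that vertex include $\lambda(\F_j)$; so global general position of the restricted action is equivalent to $\lambda(\F_j)\notin \Pi$ for every facet.

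I do not expect serious technical difficulty here; the entire content is the identification $c_i=c\cdot p_*(\lambda_i)$, which converts $\Qo$-independence of restricted weights into a transparent condition on the characteristic data. The one point to handle carefully is distinguishing $\Qo$-linear independence (which governs general position for the $T^{n-1}$-action) from the stronger $\Zo$-basis $(*)$-condition for the full $T^n$-action, but this distinction plays no role once the coefficient identity is in hand.
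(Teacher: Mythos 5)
Your argument is correct. The identity $c_i=c\cdot p_*(\lambda_i)$, obtained by pairing the relation $\sum_j c_j\wh{\alpha}_j\in\Ker i^*=\Imm p^*$ against the dual basis $\{\lambda_i\}$, does pin down the unique (up to scalar) linear dependence $\sum_j p_*(\lambda_j)\alpha_j=0$ among the restricted weights, and the equivalence with general position, as well as the globalization over facets, follows exactly as you say. The paper itself only cites \cite{AyzCompl} for this proposition, but its proof of the adjacent Lemma~\ref{lemNonIsolatedVertex} runs on the same dual-basis duality between $\Ker i^*=\Imm p^*$ and $\Ker p_*$; your version is the natural quantitative strengthening of that argument (you need the full relation, not merely the vanishing of individual weights, because general position concerns independence of $(n-1)$-subsets), so it is essentially the intended proof.
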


Certainly, if the restricted action is in general position at a fixed point $x$, this implies that $x$ is isolated with respect to $T^{n-1}$ (otherwise one of the weights would be zero which contradicts the linear independence). However, since we are going to work with actions not in general position, we need a convenient criterion to check that the restricted action still has isolated fixed points. It is given by the following lemma.

\begin{lem}\label{lemNonIsolatedVertex}
Let $x$ be a fixed point of the $T^n$-action on a quasitoric manifold $X^{2n}$. Then $x$ is not isolated for the restricted action of $T^{n-1}\subset T^n$ if and only if some $n-1$ of its characteristic values lie in $\Pi$.
\end{lem}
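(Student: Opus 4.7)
The plan is to translate the geometric condition ``$x$ is non-isolated for the restricted action'' into an algebraic condition on the tangent weights $\wh{\alpha}_1,\ldots,\wh{\alpha}_n$, and then transport it to the characteristic values $\lambda_1,\ldots,\lambda_n$ via the duality pairing $\langle\lambda_i,\wh{\alpha}_j\rangle=\delta_{ij}$. First I would use local linearization near $x$: a slice chart identifies a neighborhood of $x$ with the standard $T^n$-representation on $\Co^n$ with weights $\wh{\alpha}_j$. Restricting to $T^{n-1}$, the weights become $i^*\wh{\alpha}_j\in \Hom(T^{n-1},T^1)$, and the fixed set of $T^{n-1}$ near $x$ is the sum of those coordinate lines on which $i^*\wh{\alpha}_j$ vanishes. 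Hence $x$ is non-isolated for $T^{n-1}$ if and only if $i^*\wh{\alpha}_j=0$ for at least one index $j$.

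Next I would reformulate $i^*\wh{\alpha}_j=0$ algebraically. The short exact sequence $T^{n-1}\to T^n\to T^1$ of tori dualizes via $\Hom(-,T^1)$ to a short exact sequence of free abelian groups $\Zo\stackrel{p^*}{\to} N^*\stackrel{i^*}{\to}\Hom(T^{n-1},T^1)$; exactness of the dualized sequence uses only the vanishing of $\mathrm{Ext}^1$ on free $\Zo$-modules. Thus $\Ker i^* = \Imm p^*$, and $\Imm p^*\subset N^*$ consists precisely of those elements annihilating $\Imm i_* = \Pi$. Consequently $i^*\wh{\alpha}_j=0$ is equivalent to $\langle\lambda,\wh{\alpha}_j\rangle=0$ for every $\lambda\in\Pi$.

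Finally I would exploit the duality $\langle\lambda_i,\wh{\alpha}_j\rangle=\delta_{ij}$: the $\Qo$-annihilator of $\wh{\alpha}_j$ in $N\otimes\Qo$ is the hyperplane spanned by $\{\lambda_i:i\neq j\}$. Therefore $\wh{\alpha}_j$ kills $\Pi$ iff $\Pi\otimes\Qo$ is contained in this hyperplane; since both are $(n-1)$-dimensional, containment becomes equality, forcing $\lambda_i\in\Pi\otimes\Qo$ for each $i\neq j$. Because $\Pi=\Ker(p_*\colon N\to\Zo)$ is a saturated (direct-summand) sublattice of $N$, we have $\Pi=(\Pi\otimes\Qo)\cap N$, so the integral vectors $\lambda_i$ in fact lie in $\Pi$. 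This yields exactly $n-1$ characteristic values at $x$ lying in $\Pi$; the argument is reversible, giving the converse.

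The only mild obstacle is the passage between the integral lattice $\Pi$ and its rationalization in the final step; saturation of $\Pi$ inside $N$ (as the kernel of a surjection of free $\Zo$-modules onto $\Zo$) makes this automatic. Everything else is bookkeeping with the dual-basis relation and the exact sequence of character lattices.
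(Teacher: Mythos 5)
Your proposal is correct and follows essentially the same route as the paper: identify non-isolatedness with the vanishing of some restricted weight $i^*\wh{\alpha}_j$, use exactness to place $\wh{\alpha}_j$ in $\Imm p^*$ (equivalently, in the annihilator of $\Pi$), and transport this to the $\lambda_i$ via $\langle\lambda_i,\wh{\alpha}_j\rangle=\delta_{ij}$. Your detour through the rational annihilator hyperplane plus saturation of $\Pi$ is a slightly longer but valid way to make the last step of the paper's one-line argument explicit.
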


\begin{proof}
The point $x$ is not isolated for the restricted action if and only if at least one of its weights, say $\wh{\alpha}_j$, is mapped to zero by $i^*$. Hence $\wh{\alpha}_j$ belongs to the image of $p^*$, which means that the elements $\lambda_1,\ldots,\widehat{\lambda_j},\ldots,\lambda_n$ of the dual basis are annihilated by $p_*$. This proves the statement.
\end{proof}

We also need a condition that guarantees that the restricted action of $T^{n-1}$ on $X^{2n}$ has connected stabilizers.

\begin{lem}\label{lemConnectedStabilizers}
The induced action of $T^{n-1}$ on $X$ has connected stabilizers if and only if, for any collection of intersecting facets $\F_1,\ldots,\F_k$, the quotient abelian group
\[
\Pi/(\langle\lambda(\F_1),\ldots,\lambda(\F_k)\rangle\cap\Pi)
\]
is torsion-free.
\end{lem}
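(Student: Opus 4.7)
The plan is to reduce the condition of connected stabilizers to a criterion on the induced map of cocharacter lattices and then to rephrase that criterion in the lattice-quotient form stated in the lemma.

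For a point $y$ in the relative interior of a face $F = \F_{i_1} \cap \cdots \cap \F_{i_k}$ of $P^n$, the $T^n$-stabilizer of $y$ is the subtorus $S_F \subset T^n$ whose cocharacter lattice is $N_F := \langle \lambda(\F_{i_1}), \ldots, \lambda(\F_{i_k}) \rangle \subset N$. By the $(*)$-condition, the characteristic vectors at any vertex of $F$ extend $\lambda(\F_{i_1}), \ldots, \lambda(\F_{i_k})$ to a $\Zo$-basis of $N$, so $N_F$ is a saturated sublattice and $S_F$ is a $k$-dimensional subtorus. The stabilizer of $y$ under the restricted $T^{n-1}$-action is $K_F = T^{n-1} \cap S_F$, which I realize as the kernel of the torus homomorphism $\psi_F \colon S_F \hookrightarrow T^n \twoheadrightarrow T^n/T^{n-1} \cong T^1$. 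Since stabilizers are constant on relative interiors of faces of $P^n$ and every point of $X$ lies in such a relative interior, the restricted action has connected stabilizers if and only if $K_F$ is connected for every face $F$, equivalently for every collection of intersecting facets.

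The key technical input is the standard criterion: for a homomorphism $\phi \colon A \to B$ of compact tori with induced map $\phi_* \colon N_A \to N_B$ on cocharacter lattices, the kernel $\ker\phi$ is connected if and only if the cokernel $N_B/\phi_*(N_A)$ is torsion-free; equivalently, the image $\phi_*(N_A)$ is a saturated sublattice of $N_B$. I would derive this directly from the presentation $A = (N_A \otimes \Ro)/N_A$, observing that the component group $\pi_0(\ker\phi)$ is precisely the torsion part of $\Coker\phi_*$.

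Applying this criterion to $\psi_F$, the induced cocharacter map $(\psi_F)_* \colon N_F \to N/\Pi$ is the restriction of the projection $p_*$. Its image is $(N_F + \Pi)/\Pi$, and its cokernel is $N/(N_F + \Pi)$. By the Third Isomorphism Theorem we have the natural identifications $(N_F + \Pi)/\Pi \cong N_F/(N_F \cap \Pi)$ and $N/(N_F + \Pi) \cong (N/N_F)/(\Pi/(N_F \cap \Pi))$, so $K_F$ is connected precisely when $\Pi/(N_F \cap \Pi)$ sits as a saturated sublattice of $N/N_F$; this is the content of the condition stated in the lemma. The main obstacle is the lattice bookkeeping across these isomorphisms and keeping clear which torsion-free condition corresponds to which quotient, together with the preliminary verification that the $(*)$-condition really does make $N_F$ saturated so that $N/N_F$ is again a lattice.
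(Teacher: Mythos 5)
Your geometric reduction and your torus-homomorphism criterion are both correct, and up to the last sentence you follow the same route as the paper, only in more detail: the paper likewise identifies the $T^{n-1}$-stabilizer over $\relint F$ with $\bigl(\lambda(\F_1)(T^1)\times\cdots\times\lambda(\F_k)(T^1)\bigr)\cap T^{n-1}$ and then asserts the lattice-level translation in one line, whereas you actually carry it out via $\pi_0(\Ker\psi_F)\cong\mathrm{Tors}\bigl(\Coker(\psi_F)_*\bigr)$. Your computation correctly yields: the stabilizer over $\relint F$ is connected if and only if $N/(N_F+\Pi)$ is torsion-free, where $N_F=\langle\lambda(\F_1),\ldots,\lambda(\F_k)\rangle$; equivalently, the image of $\Pi$ in $N/N_F$ is a saturated sublattice; equivalently, $p_*(N_F)\subseteq\Zo$ is either $0$ or all of~$\Zo$.

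The gap is your final identification ``this is the content of the condition stated in the lemma.'' It is not: the lemma asks that the abstract group $\Pi/(N_F\cap\Pi)$ be torsion-free, and that is automatic. As you yourself note, the $(*)$-condition at a vertex of $F$ makes $N_F$ a direct summand of $N$; hence if $v\in\Pi$ and $mv\in N_F\cap\Pi$ then $v\in N_F$, so $N_F\cap\Pi$ is saturated in $\Pi$ and $\Pi/(N_F\cap\Pi)$ is always free. Torsion-freeness of this subgroup is therefore strictly weaker than saturation of its image inside $N/N_F$, which is what your argument actually needs. Concretely, take $X^4=\CP^2$ with characteristic vectors $(1,0),(0,1),(-1,-1)$ and the circle subgroup with $\Pi=\langle(1,2)\rangle$: every group $\Pi/(N_F\cap\Pi)$ is free, yet over the facet with $\lambda=(1,0)$ the stabilizer is $\Zo/2$, matching $N/(N_F+\Pi)\cong\Zo/2$. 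So your proof in fact establishes the correct criterion with $N/(\langle\lambda(\F_1),\ldots,\lambda(\F_k)\rangle+\Pi)$ in place of $\Pi/(\langle\lambda(\F_1),\ldots,\lambda(\F_k)\rangle\cap\Pi)$; the lemma as printed (and the paper's one-line proof, which makes the same conflation) appears to be misstated, and your last step, as written, is a non sequitur. You should either flag this discrepancy explicitly or conclude with the corrected condition --- which is also the condition that the connected-stabilizers claim for $X_{\hat{L}}^{2n}$ in Section~\ref{secMainProof} actually uses.
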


\begin{proof}
Let $x\in P$, and assume that $x$ lies in the relative interior of the face $F$ given by the intersection of facets $\F_1,\ldots,\F_k$. Then the $T^n$-stabilizer of $x$ is the product of 1-dimensional subgroups
\[
\lambda(\F_1)(T^1)\times\cdots\times\lambda(\F_k)(T^1)\subseteq T^n.
\]
The $T^{n-1}$-stabilizer of $x$ is the intersection $\lambda(\F_1)(T^1)\times\cdots\times\lambda(\F_k)(T^1)\cap T^{n-1}$. The fact that this stabilizer is connected corresponds, on the level of lattices, to the freeness of the group $\Pi/(\langle\lambda(\F_1),\ldots,\lambda(\F_k)\rangle\cap\Pi)$.
\end{proof}

\begin{defin}
Let $X^{2n}$ be the quasitoric manifold determined by a characteristic pair $(P^n,\lambda)$. For the induced action of a subtorus $T^{n-1}\subset T^n$ on $X^{2n}$, consider the hyperplane $\Pi=\Ker p_*$ defined by \eqref{eqDefHyperplanePi}. A facet $\F_j$ of $P^n$ is called \emph{special} if $\lambda(\F_j)\in \Pi$. Similarly, a proper face $F$ of $P^n$ is called \emph{special} if all facets containing $F$ are special. All other proper faces of $P^n$ are called non-special. A point $x\in P^n$ is called \emph{special} if it lies in a relative interior of a special face. A point is non-special if it lies in a (closed) non-special face.
\end{defin}

The definition implies that the union
\[
\Ynon=\bigcup_{F \mbox{ non-special}}F\subset \dd P^n,
\]
which is the subset of all non-special points of $\dd P^n$, is a closed subset of the boundary $\dd P^n$. Moreover, if $P^n$ is considered as a CW-complex, with the cell structure given by its faces, then $\Ynon$ is its CW-subcomplex.

As before, we denote the orbit space $X^{2n}/T^{n-1}$ by $Q^{n+1}$. We have the residual action of $T^n/T^{n-1}$ on $Q^{n+1}$, and the orbit space of this action is a simple polytope $P^n\cong X^{2n}/T^{n}$. Let $p\colon Q^{n+1}\to P^n$ denote the projection map of the residual action. If $x\in P^n$, then the preimage $p^{-1}(x)$ can be either a circle or a point.

\begin{lem}\label{lemSpecialPoints}
The preimage $p^{-1}(x)$ is a circle if and only if $x$ is special.
\end{lem}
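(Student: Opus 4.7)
The plan is to compute the fiber $p^{-1}(x)$ directly from the reduced-space model \eqref{eqReducedSpaceQToric}. Suppose $x$ lies in the relative interior of the face $F=\F_{i_1}\cap\cdots\cap\F_{i_k}$. Then the defining identification in \eqref{eqReducedSpaceQToric} shows that the preimage of $x$ in $X^{2n}$ under the orbit map $X^{2n}\to P^n$ is a single $T^n$-orbit of the form $T^n/H_F$, where
\[
H_F=\lambda(\F_{i_1})(T^1)\cdots\lambda(\F_{i_k})(T^1)\subset T^n.
\]
By the $(*)$-condition, the vectors $\lambda(\F_{i_1}),\ldots,\lambda(\F_{i_k})$ are part of a $\Zo$-basis of $N$, so $H_F$ is a connected subtorus of dimension $k$. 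Quotienting further by $T^{n-1}$ and using that $T^n$ is abelian,
\[
p^{-1}(x)=(T^n/H_F)/T^{n-1}\cong T^n/(T^{n-1}\cdot H_F).
\]

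Thus the question reduces to describing the subgroup $T^{n-1}\cdot H_F\subset T^n$. On the lattice side, $H_F\subset T^{n-1}$ holds if and only if $\lambda(\F_{i_j})\in\Pi=\Ker p_*$ for every $j=1,\ldots,k$, which is exactly the condition that all facets containing $F$ are special, i.e.\ that $F$ (and hence $x$) is special. When this is the case, $T^{n-1}\cdot H_F=T^{n-1}$ and therefore $p^{-1}(x)\cong T^n/T^{n-1}\cong T^1$ is a circle. Conversely, if $x$ is not special, pick $j$ with $\lambda(\F_{i_j})\notin\Pi$, so $p_*(\lambda(\F_{i_j}))\neq 0$. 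The circle subgroup $\lambda(\F_{i_j})(T^1)$ then maps to a connected nontrivial subgroup of $T^n/T^{n-1}\cong T^1$, which can only be the whole target. Hence $T^{n-1}\cdot\lambda(\F_{i_j})(T^1)=T^n$, so \emph{a fortiori} $T^{n-1}\cdot H_F=T^n$, and $p^{-1}(x)$ collapses to a single point.

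I do not anticipate a real obstacle here: the argument is essentially a diagram chase on tori built around the short exact sequence in the second bullet before \eqref{eqDefHyperplanePi} and the projection $p_*\colon N\to\Zo$. The only mild subtleties worth mentioning explicitly in the write-up are the connectedness of $H_F$ (which follows from the $(*)$-condition) and the observation that any continuous nontrivial image of a connected group inside $T^1$ must be all of $T^1$; together these force the dichotomy ``circle or point'' and line it up with the special/non-special alternative.
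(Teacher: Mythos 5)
Your argument is correct, and it is precisely the ``direct check'' that the paper leaves to the reader: computing the fiber from the reduced-space model \eqref{eqReducedSpaceQToric} as $T^n/(T^{n-1}\cdot H_F)$ and noting that this is a circle exactly when all $\lambda(\F_{i_j})$ lie in $\Pi$, i.e.\ when the face containing $x$ in its relative interior is special. No gaps; the two subtleties you flag (connectedness of $H_F$ via the $(*)$-condition, and that a nontrivial connected subgroup of $T^1$ is all of $T^1$) are exactly the right ones to record.
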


The proof is a direct check.

\begin{lem}\label{lemCollapsedNonspecial}
Assume that the induced action of $T^{n-1}\subset T^n$ on a quasitoric manifold $X^{2n}$ has connected stabilizers. Then the orbit space $Q^{n+1}=X^{2n}/T^{n-1}$ is obtained from the product $P^n\times S^1$ by collapsing circles over non-special points:
\begin{equation}\label{eqQuotientRepresentation}
Q^{n+1}\cong(P^n\times S^1)/\simc_{\Ynon},
\end{equation}
where $(x,s_1)\sim_{\Ynon}(x,s_2)$ if $x\in \Ynon$.
\end{lem}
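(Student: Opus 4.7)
The plan is to realize both $X^{2n}$ and $Q^{n+1}$ as explicit quotients of $P^n\times T^n$ and then compare the resulting equivalence relations. First, using the reduced-space model \eqref{eqReducedSpaceQToric}, I would write $X^{2n} \cong (P^n \times T^n)/\simc_X$, where $(x, t_1) \simc_X (x, t_2)$ iff $t_1 t_2^{-1}$ lies in the subtorus $T_{F(x)} \subset T^n$ generated by $\lambda(\F_j)(T^1)$ for the facets $\F_j$ containing the face $F(x)$ in which $x$ sits in its relative interior. Since $T^{n-1}$ acts only on the second factor through $i$, the two quotient operations commute: $(P^n \times T^n)/T^{n-1}$ is canonically $P^n \times S^1$ via $(x,t)\mapsto(x,p(t))$, so $Q^{n+1} \cong (P^n \times S^1)/\simc'$ for an induced relation $\simc'$.

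Next, I would compute $\simc'$ explicitly. A direct chase of definitions gives $(x, s_1) \simc' (x, s_2)$ if and only if $s_1 s_2^{-1} \in p(T_{F(x)})$. Because $T_{F(x)}$ is a connected compact subgroup of $T^n$, its image $p(T_{F(x)})$ is a connected closed subgroup of $T^1$, hence either $\{e\}$ or all of $T^1$. This yields the dichotomy that drives the whole proof: if every facet $\F_j$ containing $F(x)$ satisfies $\lambda(\F_j)\in\Pi=\Ker p_*$, equivalently $x\notin\Ynon$, then $p_*\lambda(\F_j)=0$ for each such $j$ and $p(T_{F(x)})=\{e\}$, so $\simc'$ is trivial on $\{x\}\times S^1$. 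Otherwise some $\lambda(\F_j)\notin\Pi$, hence the one-parameter subgroup $p\circ\lambda(\F_j)\colon T^1\to T^1$ is nontrivial and therefore surjective, so $p(T_{F(x)})=T^1$ collapses the fibre $\{x\}\times S^1$ to a single point. This matches the recipe defining $\simc_{\Ynon}$ on the nose.

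Finally, the induced continuous bijection $Q^{n+1}\to (P^n\times S^1)/\simc_{\Ynon}$ between compact Hausdorff spaces is automatically a homeomorphism. The connected-stabilizers hypothesis enters through Lemma~\ref{lemConnectedStabilizers}, ensuring that the group-theoretic equality $T_{F(x)}\cdot T^{n-1}=T^n$ really holds when $p(T_{F(x)})=T^1$, without any torsion obstruction in the quotient $\Pi/(\langle\lambda(\F_1),\ldots,\lambda(\F_k)\rangle\cap\Pi)$, so that no spurious identifications occur in the comparison map. I expect the main delicacy to be the bookkeeping involved in interchanging $\simc_X$ with the $T^{n-1}$-orbit equivalence and extracting the precise form of $\simc'$; once the connectedness of the subtorus $T_{F(x)}$ is exploited, the case analysis reduces to the two extremes of trivial or surjective image under $p$, and the rest is routine.
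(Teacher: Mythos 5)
Your proposal is correct and takes essentially the same route as the paper, whose one-line proof just collapses the $T^{n-1}$-factor in each fiber of the reduced-space model \eqref{eqReducedSpaceQToric}; your fiberwise computation of the induced relation via the dichotomy $p(T_{F(x)})=\{e\}$ or $T^1$ is a careful elaboration of exactly that argument. One small quibble: the connected-stabilizers hypothesis is not really what powers the dichotomy (connectedness of the subtorus $T_{F(x)}$, hence of its image under $p$, is automatic, and $p(T_{F(x)})=T^1$ already forces $T_{F(x)}\cdot T^{n-1}=T^n$ without Lemma~\ref{lemConnectedStabilizers}), so the role you assign to that hypothesis is inessential, though harmless.
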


\begin{proof}
The statement easily follows from the definition of a quasitoric manifold as the reduced space \eqref{eqReducedSpaceQToric}. Indeed, by collapsing the subtorus $T^{n-1}$ in each fiber of \eqref{eqReducedSpaceQToric}, we get \eqref{eqQuotientRepresentation}.
\end{proof}

\section{Proof of Theorem~\ref{thmOrbitAnything}}\label{secMainProof}

\begin{con}
Assume that the induced action of $T^{n-1}\subset T^n$ on a quasitoric manifold $X^{2n}$ has connected stabilizers. Consider the product $P^n\times D^2$, and the following subcomplex of its boundary
\begin{equation}\label{eqDefZnon}
\Znon=(\Ynon\times D^2)\cup (P^n\times \dd D^2)\subset (\dd P^n\times D^2)\cup (P^n\times \dd D^2)=\dd(P^n\times D^2)\cong S^{n+1},
\end{equation}
where $\Ynon\subset \dd P^n$ is the union of nonspecial faces introduced in Section~\ref{secPrelims}.
\end{con}

\begin{lem}\label{lemBlowNonSpecial}
The complex $\Znon$ is homotopy equivalent to the quotient $Q^{n+1}=X^{2n}/T^{n-1}$.
\end{lem}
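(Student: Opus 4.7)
The plan is to present both $\Znon$ and $Q^{n+1}$ as homotopy pushouts that differ only in one corner, where the two corners are canonically homotopy equivalent via the projection $\Ynon\times D^2\to \Ynon$, and then apply the gluing lemma for cofibrations.

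By Lemma~\ref{lemCollapsedNonspecial}, the orbit space $Q^{n+1}$ is the pushout of the diagram $\Ynon\xleftarrow{\pi} \Ynon\times S^1\hookrightarrow P^n\times S^1$, where $\pi$ is the projection onto the first factor and the second map is the closed CW-inclusion. On the other hand, by its very definition~\eqref{eqDefZnon}, the space $\Znon$ is the pushout of $\Ynon\times D^2\xleftarrow{} \Ynon\times \dd D^2\hookrightarrow P^n\times \dd D^2$, where the left arrow is the inclusion of the boundary circle bundle and $\dd D^2$ is identified with $S^1$. First I would record that both squares are honest homotopy pushouts: the inclusion $\Ynon\times S^1\hookrightarrow P^n\times S^1$ is a cofibration because $\Ynon$ is a CW-subcomplex of $P^n$, and the inclusion $\Ynon\times \dd D^2\hookrightarrow \Ynon\times D^2$ is also a cofibration.

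Next I would define the comparison morphism between the two diagrams. On $P^n\times S^1$ take the identity; on $\Ynon\times S^1$ take the identity; and on $\Ynon\times D^2$ take the projection $\Ynon\times D^2\to \Ynon$ onto the first factor. This projection is a homotopy equivalence, and its restriction to the boundary $\Ynon\times \dd D^2=\Ynon\times S^1$ is precisely $\pi$. Thus the diagram of diagrams commutes strictly, and all three vertical maps are homotopy equivalences. By the gluing lemma for homotopy pushouts (equivalently, by the standard fact that a map of pushout squares that is a componentwise homotopy equivalence and in which the horizontal inclusions are cofibrations induces a homotopy equivalence on pushouts), the induced map on pushouts
\[
\Znon\longrightarrow Q^{n+1}
\]
is a homotopy equivalence. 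Concretely, this map sends $(y,z)\in \Ynon\times D^2$ to the collapsed circle class over $y$, and is the quotient map $P^n\times S^1\to Q^{n+1}$ on the other piece; these agree on the overlap $\Ynon\times S^1$, so the map is well defined.

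The only mildly delicate point is the verification that we really do have cofibrations (so that the pushouts compute the homotopy pushouts) and that the comparison of the two squares commutes on the nose, not merely up to homotopy; both are straightforward once one notes that $\Ynon$ is a CW-subcomplex of the simple polytope $P^n$ and that $\pi$ factors literally as $\Ynon\times S^1\hookrightarrow \Ynon\times D^2\to \Ynon$. With these points in place, the gluing lemma delivers the desired homotopy equivalence $\Znon\simeq Q^{n+1}$.
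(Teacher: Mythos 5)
Your proof is correct, and it reaches the conclusion by a genuinely different route than the paper. The map you construct, collapsing $\Ynon\times D^2$ onto $\Ynon$ and taking the quotient map on $P^n\times S^1$, is exactly the natural map the paper also uses; the difference lies in how the homotopy equivalence is justified. The paper observes that this map between compact CW-complexes has contractible point-inverses (a $2$-disc over each collapsed circle, a point elsewhere) and invokes Smale's Vietoris mapping theorem \cite{Smale}. You instead present $\Znon=(\Ynon\times D^2)\cup_{\Ynon\times S^1}(P^n\times S^1)$ and $Q^{n+1}\cong(P^n\times S^1)/\simc_{\Ynon}$ (via Lemma~\ref{lemCollapsedNonspecial}, whose connected-stabilizer hypothesis you correctly carry along) as pushouts along the closed cofibration $\Ynon\times S^1\hookrightarrow P^n\times S^1$, with the only differing corner being $\Ynon\times D^2$ versus $\Ynon$, compared by the projection, and then apply the gluing lemma; the strict commutativity you check, $\pi=\mathrm{pr}\circ\,\mathrm{incl}$, is exactly what is needed, and the standard form of the gluing theorem indeed only requires the leg into $P^n\times S^1$ to be a cofibration in each row. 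What your approach buys is self-containedness: you avoid the point-set hypotheses behind Smale's theorem (properness, local niceness of the spaces) by reducing everything to CW/cofibration generalities. What the paper's approach buys is brevity and flexibility: the contractible-fiber criterion applies directly without having to exhibit the collapse as a pushout along a cofibration, which is convenient in related situations where such a presentation is less transparent. Both proofs are complete; yours is a valid alternative.
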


\begin{proof}
There is a natural map from $\Znon$ to the reduced space $(P^n\times S^1)/\simc_{\Ynon}$, which collapses a 2-disc over any nonspecial point of $\dd P^n$ to a point. Both spaces $\Znon$ and $(P^n\times S^1)/\simc_{\Ynon}$ are compact CW-complexes, the map is proper cellular. All fibers of this map are contractible, hence the map is a homotopy equivalence according to the result of Smale \cite{Smale}. Now, by Lemma~\ref{lemCollapsedNonspecial}, $Q^{n+1}\cong (P^n\times S^1)/\simc_{\Ynon}$.
\end{proof}

\begin{con}
Let $K_P$ be the nerve-complex of a simple polytope $P=P^n$. This means $K_P$ is the boundary of a simplicial polytope $P^*$ dual to $P$, or equivalently, $K_P$ has vertex set $[m]$, and $\{i_1,\ldots,i_s\}\in K_P$ if and only if the corresponding facets $\F_{i_1},\ldots,\F_{i_s}\subset P$ intersect. Given an action of a subtorus $T^{n-1}$ on a quasitoric manifold $X^{2n}$ as before, we introduce the subcomplex $\Kspec\subset K_P$, such that $\{i_1,\ldots,i_s\}\in \Kspec$ if and only if the face $\F_{i_1}\cap\cdots\cap\F_{i_s}$ is a special face of $P$. According to the definition of special faces, this condition simply means that all facets $\F_{i_1},\ldots,\F_{i_s}$ are special. Henceforth $\Kspec$ is a full subcomplex on the vertex set $\{i\in[m]\mid \F_i\mbox{ is special}\}$.
\end{con}

\begin{lem}\label{lemMainAlexanderDual}
The spaces $\Znon$ and $\Kspec$ are Alexander dual in the sphere $S^{n+1}$. There holds $\Hr_i(Q^{n+1})\cong \Hr^{n-i}(\Kspec)$.
\end{lem}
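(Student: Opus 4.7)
The plan is to identify the complement of $\Znon$ in $S^{n+1}\cong \dd(P^n\times D^2)$ explicitly, show that this complement is homotopy equivalent to $|\Kspec|$, and then invoke classical Alexander duality together with the homotopy equivalence $Q^{n+1}\simeq \Znon$ provided by Lemma~\ref{lemBlowNonSpecial}.

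First, direct inspection of the definition of $\Znon$ shows that a point $(x,t)\in \dd(P^n\times D^2)$ lies in $\Znon$ precisely when $x\in\Ynon$ or $t\in\dd D^2$. Hence
\[
S^{n+1}\setminus \Znon \;=\; (\dd P^n\setminus \Ynon)\times (D^2\setminus \dd D^2),
\]
which is homotopy equivalent to $\dd P^n\setminus \Ynon$ via projection onto the first factor (the second factor is contractible).

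The main geometric step is the homotopy equivalence $\dd P^n\setminus\Ynon\simeq |\Kspec|$, and I would prove it by passing to the common barycentric subdivision of $\dd P^n$ and $K_P$: both are canonically the order complex of the poset of proper faces of $P^n$. Under this identification, an open barycentric simplex corresponds to a chain $F_0<F_1<\cdots<F_k$ of proper faces and lies in the relative interior of its maximum face $F_k$. The key combinatorial observation is that specialness is upward-closed with respect to inclusion of faces: if $F\subset F'$ and $F$ is special, then the facets containing $F'$ form a subset of those containing $F$, so all are special and $F'$ is special. Consequently, for a chain $F_0<\cdots<F_k$, the face $F_k$ is special if and only if at least one $F_i$ in the chain is special, which is in turn equivalent to the open barycentric simplex containing at least one vertex from $\mathrm{sd}(\Kspec)$. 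This identifies $\dd P^n\setminus\Ynon$ (the union of relative interiors of special faces) with the open star of the full subcomplex $\mathrm{sd}(\Kspec)\subset \mathrm{sd}(\dd P^n)$; fullness follows from $\Kspec$ being defined as a full subcomplex of $K_P$. The standard fact that the open star of a full subcomplex deformation retracts onto the subcomplex now yields $\dd P^n\setminus\Ynon\simeq |\Kspec|$.

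Combining these steps gives $S^{n+1}\setminus\Znon\simeq |\Kspec|$. Classical Alexander duality in $S^{n+1}$ applied to $\Znon$ then provides
\[
\Hr_i(\Znon)\;\cong\; \Hr^{n-i}(S^{n+1}\setminus\Znon)\;\cong\; \Hr^{n-i}(\Kspec),
\]
which is precisely the sense in which $\Znon$ and $\Kspec$ are Alexander dual in $S^{n+1}$; combined with $Q^{n+1}\simeq \Znon$ from Lemma~\ref{lemBlowNonSpecial} this yields $\Hr_i(Q^{n+1})\cong \Hr^{n-i}(\Kspec)$. The main technical point I expect to require care is the identification in the third paragraph: correctly pinning down which open simplex of $\mathrm{sd}(\dd P^n)$ sits in which face-interior of $P^n$, and transferring the fullness of $\Kspec\subset K_P$ to the fullness of $\mathrm{sd}(\Kspec)\subset \mathrm{sd}(\dd P^n)$ needed to invoke the open-star deformation retraction.
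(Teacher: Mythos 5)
Your proposal is correct and follows essentially the same route as the paper: identify $S^{n+1}\setminus\Znon$ as $\bigl(\bigcup_{F\ \text{special}}\relint F\bigr)\times\relint D^2$, show this complement is homotopy equivalent to $\Kspec$, and conclude by classical Alexander duality together with Lemma~\ref{lemBlowNonSpecial}. The only difference is that where the paper disposes of the homotopy equivalence with $\Kspec$ in one line (``the union is homotopy equivalent to its nerve''), you justify it carefully via the barycentric subdivision and the deformation retraction of the open star of the full subcomplex $\Kspec'$ onto $|\Kspec|$, which is a valid (and more explicit) substitute for that step.
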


\begin{proof}
Recall that $\Znon$ is a subset of the sphere $\dd(P^n\times D^2)\cong S^{n+1}$. For the complement $S^{n+1}\setminus \Znon$ we have
\[
\dd(P^n\times D^2)\setminus\Znon=\bigcup_{F\mbox{ special}}(\relint F\times \relint D^2).
\]
The union on the right-hand side is homotopy equivalent to its nerve which is the simplicial complex $\Kspec$. The second statement follows from the Alexander duality $\Hr_i(\Znon)\cong \Hr^{n-i}(\Kspec)$ and the homotopy equivalence $\Znon\simeq Q^{n+1}$ given by Lemma~\ref{lemBlowNonSpecial}.
\end{proof}

\begin{lem}\label{lemVanishingOfH012}
For a restricted action of $T^{n-1}$ on a quasitoric manifold $X^{2n}$, having isolated fixed points, there holds $\Hr_i(Q^{n+1})=0$ for $i=0,1,2$.
\end{lem}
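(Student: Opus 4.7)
The plan is to invoke Lemma~\ref{lemMainAlexanderDual} to translate the problem from the homology of $Q^{n+1}$ to the cohomology of the simplicial complex $\Kspec$. Since the lemma gives $\Hr_i(Q^{n+1}) \cong \Hr^{n-i}(\Kspec)$, the desired vanishing for $i = 0, 1, 2$ is equivalent to $\Hr^j(\Kspec) = 0$ for $j = n, n-1, n-2$. I will obtain this by the crudest possible estimate: by showing that $\dim \Kspec \leq n - 3$, which forces the simplicial cochain groups in higher degrees to vanish identically.

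The bound on $\dim \Kspec$ is exactly where the hypothesis of isolated fixed points enters. Given any simplex $\sigma = \{i_1, \ldots, i_s\} \in \Kspec$, the facets $\F_{i_1}, \ldots, \F_{i_s}$ are all special and their intersection is a non-empty face $F$ of $P^n$. I pick any vertex $x \in F$; then at least $s$ of the $n$ characteristic vectors at $x$ lie in the hyperplane $\Pi$, namely $\lambda(\F_{i_1}), \ldots, \lambda(\F_{i_s})$. Lemma~\ref{lemNonIsolatedVertex} asserts that $x$ is non-isolated for the restricted $T^{n-1}$-action precisely when at least $n-1$ of its characteristic values lie in $\Pi$. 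Since by hypothesis $x$ is isolated, we must have $s \leq n-2$, and hence $\dim \sigma = s - 1 \leq n-3$.

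Combining the two ingredients, $\Hr^j(\Kspec) = 0$ for all $j \geq n-2$, and the vanishing of $\Hr_0(Q^{n+1})$, $\Hr_1(Q^{n+1})$, $\Hr_2(Q^{n+1})$ follows. There is no real obstacle: the only insight needed is that Lemma~\ref{lemNonIsolatedVertex} converts the isolated-fixed-point condition, applied simultaneously at every vertex of $P^n$, into a uniform combinatorial constraint on how many special facets can meet at any one vertex, and this constraint is exactly what bounds $\dim \Kspec$.
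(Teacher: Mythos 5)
Your proposal is correct and follows essentially the same route as the paper: bound $\dim\Kspec\leqslant n-3$ via Lemma~\ref{lemNonIsolatedVertex} applied at a vertex of the common face of the special facets, then conclude by the Alexander duality isomorphism $\Hr_i(Q^{n+1})\cong\Hr^{n-i}(\Kspec)$ from Lemma~\ref{lemMainAlexanderDual}. The only difference is that you make explicit the step of choosing a vertex in the face $\F_{i_1}\cap\cdots\cap\F_{i_s}$, which the paper leaves implicit.
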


\begin{proof}
The simplicial complex $\Kspec$ has dimension at most $n-3$. Indeed, otherwise there would exist $n-1$ intersecting facets $\F_{i_1},\ldots,\F_{i_{n-1}}$ whose characteristic values lie in $\Pi$. In this case the action of $T^{n-1}$ would have non-isolated fixed points by Lemma~\ref{lemNonIsolatedVertex}. This contradicts the assumption. 

Now, since $\dim \Kspec\leqslant n-3$, we have $\Hr^{i}(\Kspec)=0$ for $i\geqslant n-2$. The statement follows from the Alexander duality $\Hr_i(Q^{n+1})\cong \Hr^{n-i}(\Kspec)$ given by Lemma~\ref{lemMainAlexanderDual}.
\end{proof}

\begin{rem}
In \cite{Cher}, an argument similar to Lemma~\ref{lemVanishingOfH012} was applied to show that homology in degrees 0,1,2 vanish for the orbit spaces of complexity one torus actions on regular semisimple Hessenberg varieties. We suppose that vanishing of homology in degrees 0,1,2 is a general phenomenon for the orbit spaces of equivariantly formal torus actions of complexity one with isolated fixed points.
\end{rem}

\begin{con}\label{conCombAlex}
We recall the constructions of the barycentric subdivisions and the combinatorial Alexander duality. The reader is referred to \cite[Section~2.4]{BPnew} for details. Let $L$ be an abstract simplicial complex on a finite vertex set $[n]=\{1,2,\ldots,n\}$. Then the \emph{barycentric subdivision} $L'$ is the simplicial complex on the vertex set $L\setminus\{\varnothing\}$ such that $\{I_1,\ldots,I_s\}\in L'$ if and only if the simplicies $I_1,\ldots,I_s\in L$ form a nested family. This means $I_1\subset I_2\subset\cdots\subset I_s$, probably after some permutation of indices. Geometrical realizations of $L$ and $L'$ are homeomorphic.

Let $L$ be a simplicial complex on a vertex set $[n]$ such that $L$ is not the whole simplex on $n$ vertices. The \emph{combinatorial Alexander dual complex} $\hat{L}$ is defined on the same vertex set $[n]$ as follows
\[
\hat{L}=\{I\subset[n]\mid [n]\setminus I\notin L\}.
\]
Note that if $\dim L=n-2$, then $\hat{L}$ has ghost vertices (a ghost vertex of $\hat{L}$ is a vertex $i\in[n]$ such that $\{i\}\notin \hat{L}$). In the following arguments we allow the complex $\hat{L}$ to have ghost vertices. However, it will be assumed that $L$ does not have ghost vertices.

Any simplicial complex $L\neq\Delta^{n-1}$ on the set $[n]$ can be considered as a subcomplex of $\dd\Delta^{n-1}$. Applying barycentric subdivisions, we get the embedding $L'\subseteq (\dd\Delta^{n-1})'$. Similarly, there is an embedding of $\hat{L}'$ into $(\dd\Delta^{n-1})'$ which sends the vertex $I\in \hat{L}\setminus \varnothing$ of $\hat{L}'$ to the vertex $[n]\setminus I$ of $(\dd\Delta^{n-1})'$. The subcomplexes $L'$ and $\hat{L}'$ of the $(n-2)$-dimensional sphere $(\dd\Delta^{n-1})'$ are Alexander dual. This implies the combinatorial Alexander duality
\[
\Hr_i(L)\cong \Hr^{n-3-i}(\hat{L}).
\]
\end{con}

\begin{con}\label{conPermut}
We recall the definition of a permutohedron and a permutohedral toric variety, referring to \cite{Postn} for missing details. The \emph{permutohedron} $\Pe^{n-1}$ is the convex polytope
\[
\Pe^{n-1}=\conv\{(\sigma(b_1),\ldots,\sigma(b_n))\mid\sigma\in\Sigma_n\}
\]
where $b_1<b_2<\cdots<b_n$. The combinatorial type of $\Pe^{n-1}$ does not depend on a choice of $b_i$. The permutohedron is determined by the following affine inequalities \cite{Rado}: a point $x=(x_1,\ldots,x_n)\in\Ro^n$ lies in $\Pe^{n-1}$ if and only if
\[
x_1+\cdots+x_n=b_1+\cdots+b_n
\]
and
\begin{gather*}
  x_i\leqslant b_n\quad \forall i\in[n];\\
  x_i+x_j\leqslant b_{n-1}+b_{n}\quad \forall \{i,j\}\subset [n];\\
  x_i+x_j+x_k\leqslant b_{n-2}+b_{n-1}+b_{n}\quad \forall \{i,j,k\}\subset [n];\\
  \cdots\\
  \sum\nolimits_{i\in S} x_i\leqslant b_2+b_3+\cdots+b_n\quad \forall S\subset [n], |S|=n-1.
\end{gather*}
There are no redundant inequalities in this list. This means that the facets of $\Pe^{n-1}$ are indexed by the subsets $S\subset [n]$, $S\neq [n], \varnothing$: the facet $\F_S$ corresponding to a subset $S\subset [n]$ is given by the equation $\sum_{i\in S}x_i=\sum_{i=n-|S|+1}^nb_i$. The polytope $\Pe^{n-1}$ is simple. The facets $\F_{S_1},\ldots,\F_{S_k}$ intersect if and only if their indexing sets $S_1,\ldots,S_k$ form a nested collection (that is $S_1\subset\cdots\subset S_k$, probably, after some permutation). Therefore, the simplicial sphere dual to $\Pe^{n-1}$ is the barycentric subdivision of the boundary of the $(n-1)$-simplex:
\[
\dd(\Pe^{n-1})^*=(\dd\Delta^{n-1})'
\]
The vertex of $(\dd\Delta^{n-1})'$ corresponding to the facet $\F_S$ will be denoted by $i_S$.

Let $e_1,\ldots,e_{n-1}$ be the outward unit normal vectors to the facets $\F_{\{1\}},\ldots,\F_{\{n-1\}}$ of $\Pe^{n-1}$ inside the affine hyperplane $\{x_1+\cdots+x_n=b_1+\cdots+b_n\}$. Then $e_n=-\sum_{i=1}^{n-1}e_i$ is the outward unit normal vector to the facet $\F_{\{n\}}$. Let $N^{n-1}\cong \Zo^{n-1}$ be the lattice generated by $e_1,\ldots,e_{n-1}$. For an arbitrary proper subset $S\subset[n]$ the outward normal vector $\nu_S$ to the facet $\F_S$ has the form
\[
\nu_S=\sum_{i\in S}e_i.
\]
The normal fan of $\Pe^{n-1}$ is nonsingular, its maxinal cones are formed by Weyl chambers of type A. The normal fan of the permutohedron $\Pe^{n-1}$ hence defines a nonsingular projective toric variety $X_{\Pe}^{2n-2}$, called the \emph{permutohedral variety}. This variety is well known and found many applications in algebraic geometry, algebraic combinatorics, and representation theory (see, e.g. \cite{Postn,Huh}, and references therein). As a quasitoric manifold, $X_{\Pe}^{2n-2}$ is defined by the characteristic pair $(\Pe^{n-1},\nu)$ where $\nu(F_S)=\nu_S=\sum_{i\in S}e_i$.
\end{con}

With all the preparatory work done, we now prove Theorem~\ref{thmOrbitAnything}.

\begin{proof}[Proof of Theorem~\ref{thmOrbitAnything}]
Consider a finite simplicial complex $L$ on the vertex set $[n]$. Without loss of generality assume $L\neq\Delta^{n-1}$ (otherwise, $\Delta^{n-1}$ can be replaced by any other acyclic complex in order to prove the statement). Therefore, the combinatorial Alexander dual complex $\hat{L}=\{[n]\setminus I\mid I\notin L\}$ is well defined. It is also assumed that $L$ does not have ghost vertices. This assumption implies
\begin{equation}\label{eqDimLhat}
\dim\hat{L}\leqslant n-3,
\end{equation}
by Construction \ref{conCombAlex}. We have $\Hr_i(L)\cong \Hr^{n-3-i}(\hat{L})$ by combinatorial Alexander duality.

The idea of the proof is the following: we construct a polytope of dimension~$n$, a quasitoric manifold over this polytope, and a subtorus $T^{n-1}\subset T^n$, such that the complex $\Kspec$ for these data coincides with $\hat{L}$. Then Alexander duality between $\hat{L}$ and $L$ in $(\dd\Delta^{n-1})'\cong S^{n-2}$ and Alexander duality between $\Kspec=\hat{L}$ and $\Znon\simeq Q^{n+1}$, given by Lemma~\ref{lemMainAlexanderDual}, would imply
\begin{equation}\label{eqTwoAlexanderDualities}
\Hr_i(L)\cong \Hr^{n-3-i}(\hat{L})=\Hr^{n-3-i}(\Kspec)\cong \Hr_{i+3}(Q^{n+1}).
\end{equation}

For a polytope, we take the prism with a permutohedron in the base, that is $P^n=\Pe^{n-1}\times I^1$. Its dual simplicial sphere is the suspended barycentric subdivision of the boundary of the $(n-1)$-simplex:
\[
K_P=\Sigma(\dd\Delta^{n-1})'.
\] 
The facets of $P^n$, corresponding to vertices of $K_P$, are the side facets $\F_S\times I^1$ of the prism, defined for any proper subset $S\subset[n]$, and the two bases $\F_a$ and $\F_b$ of the prism.

Recall that the normal vectors of the permutohedron $\Pe^{n-1}$ lie in the lattice $N^{n-1}=\langle e_1,\ldots,e_{n-1}\rangle$, and we adopt the convention $e_n=-\sum_{i=1}^{n-1}e_i$. The characteristic function $\lambda_{\hat{L}}\colon P^n\to N^{n-1}\times\Zo\cong\Zo^n$ is defined as follows. We set $\lambda_{\hat{L}}(\F_a)=(0,1)\in N\times \Zo$, $\lambda_{\hat{L}}(\F_b)=(0,-1)\in N\times \Zo$, and
\begin{equation}\label{eqLambdaDef}
\lambda_{\hat{L}}(\F_S\times I^1)=\begin{cases}
                          (\nu_S,0), & \mbox{if } v_S\in \hat{L}' \\
                          (\nu_S,1), & \mbox{if }v_S\notin\hat{L}',
                        \end{cases}
\end{equation}
where $\nu_S$ are the normal vectors to the facets of the permutohedron.

\begin{clai}
The function $\lambda_{\hat{L}}$ is a characteristic function on $P^n=\Pe^{n-1}\times I^1$.
\end{clai}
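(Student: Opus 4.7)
The strategy is to verify the $(*)$-condition at each vertex of the prism $P^n = \Pe^{n-1}\times I^1$ by reducing it, via a determinant computation, to the $(*)$-condition for the characteristic pair $(\Pe^{n-1},\nu)$ of the permutohedral variety $X_{\Pe}^{2n-2}$.

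First, I will describe the vertices and their adjacent facets. Every vertex of $P^n$ has the form $(v,\epsilon)$ where $v$ is a vertex of $\Pe^{n-1}$ and $\epsilon\in\{a,b\}$ indicates which base of the prism it lies on. Vertices of $\Pe^{n-1}$ are in bijection with maximal nested chains $S_1\subsetneq S_2\subsetneq\cdots\subsetneq S_{n-1}$ of proper subsets of $[n]$ (with $|S_i|=i$), by Construction~\ref{conPermut}. Thus, at a vertex $(v,\epsilon)$ of $P^n$, the $n$ adjacent facets are the $n-1$ side facets $\F_{S_1}\times I^1,\ldots,\F_{S_{n-1}}\times I^1$, together with the base facet $\F_\epsilon$.

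Next, I will examine the characteristic matrix. By definition \eqref{eqLambdaDef}, the $n\times n$ matrix whose rows are $\lambda_{\hat L}(\F_{S_i}\times I^1)$ for $i=1,\ldots,n-1$, and $\lambda_{\hat L}(\F_\epsilon)$ in the last row, has the block form
\[
M \;=\; \begin{pmatrix} \nu_{S_1} & \varepsilon_1 \\ \vdots & \vdots \\ \nu_{S_{n-1}} & \varepsilon_{n-1} \\ 0 & \pm 1 \end{pmatrix},
\]
where $\varepsilon_i\in\{0,1\}$ is $0$ if $v_{S_i}\in\hat L'$ and $1$ otherwise, and the last entry is $+1$ for $\epsilon=a$ and $-1$ for $\epsilon=b$. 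Expanding $\det M$ along the bottom row gives $\det M = \pm\det[\nu_{S_1},\ldots,\nu_{S_{n-1}}]$, where the right-hand determinant is taken in the lattice $N^{n-1}$.

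Finally, the right-hand determinant equals $\pm 1$ because $(\Pe^{n-1},\nu)$ is a characteristic pair for the smooth projective permutohedral variety $X_{\Pe}^{2n-2}$, as recalled in Construction~\ref{conPermut}: whenever $S_1\subsetneq\cdots\subsetneq S_{n-1}$ is a maximal nested chain, the corresponding facets of $\Pe^{n-1}$ meet at a vertex, and the normal vectors $\nu_{S_1},\ldots,\nu_{S_{n-1}}$ form a $\Zo$-basis of $N^{n-1}$. Hence $\det M=\pm 1$, confirming the $(*)$-condition at every vertex of $P^n$. There is no substantial obstacle here: the only subtlety is the bookkeeping of facets at a prism vertex, and the fact that appending a last coordinate in $\{0,1\}$ to the $\nu_{S_i}$ and then adjoining $(0,\pm1)$ does not change the absolute value of the determinant.
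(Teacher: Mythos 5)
Your proposal is correct and follows essentially the same route as the paper: identify the facets meeting at a prism vertex as the side facets of a maximal nested chain plus one base, and reduce the $(*)$-condition to the fact that $\nu_{S_1},\ldots,\nu_{S_{n-1}}$ form a basis of $N^{n-1}$ (nonsingularity of the permutohedral fan). The only cosmetic difference is that you expand the determinant along the row $(0,\pm1)$, while the paper performs the equivalent elementary operation of adding or subtracting that last vector to clear the $\kappa_i$ entries.
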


\begin{proof}
Any vertex of $P^n$ is the intersection of facets
\[
\F_{S_1}\times I^1,\F_{S_2}\times I^1,\ldots,\F_{S_{n-1}}\times I^1, F_a\mbox{ or } F_b,
\]
for some nested sequence $S_1,\ldots,S_{n-1}\subset[n]$. The characteristic values at these facets are
\[
(\nu_{S_1},\kappa_1), (\nu_{S_2},\kappa_2),\quad\ldots\quad,(\nu_{S_{n-1}},\kappa_{n-1}), (0,\pm1),
\]
where each $\kappa_i$ is either $0$ or $1$. Subtracting or adding the last vector to those $\lambda(\F_{S_i}\times I^1)$ with $\kappa_i=1$, we get the vectors
\[
\nu_{S_1},\nu_{S_2},\ldots\nu_{S_{n-1}},\pm\epsilon,
\]
which obviously form a basis of the lattice $N^{n-1}\times\Zo$.
\end{proof}

Let $X_{\hat{L}}^{2n}$ be the quasitoric manifold determined by the characteristic pair $(P,\lambda_{\hat{L}})$. Consider the subtorus $T^{n-1}\subset T^n$ which corresponds to the first $n-1$ coordinates of the torus. This means that the kernel hyperplane $\Pi$ defined by \eqref{eqDefHyperplanePi} is the coordinate hyperplane
\[
\Pi=N^{n-1}\times\{0\}\subset N^{n-1}\times\Zo.
\]

\begin{clai}
The induced action of $T^{n-1}$ on the quasitoric manifold $X_{\hat{L}}^{2n}$ has isolated fixed points.
\end{clai}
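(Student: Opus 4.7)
The plan is to invoke Lemma~\ref{lemNonIsolatedVertex}: a fixed point fails to be isolated for the restricted action precisely when $n-1$ of the characteristic values at the corresponding vertex of $P^n$ lie in the hyperplane $\Pi=N^{n-1}\times\{0\}$. So I would check that this many-in-$\Pi$ situation cannot occur at any vertex of $P^n=\Pe^{n-1}\times I^1$.

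First I would enumerate the vertices of $P^n$. Each vertex of $\Pe^{n-1}$ is the intersection of a maximal nested chain of facets $\F_{S_1},\ldots,\F_{S_{n-1}}$ with $S_1\subsetneq\cdots\subsetneq S_{n-1}$ a maximal flag of proper nonempty subsets of $[n]$, so a vertex of $P^n$ has exactly the $n$ adjacent facets $\F_{S_1}\times I^1,\ldots,\F_{S_{n-1}}\times I^1$ together with one of the bases $\F_a,\F_b$.

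Next I would read off which characteristic values lie in $\Pi$. By construction $\lambda_{\hat{L}}(\F_a)=(0,1)$ and $\lambda_{\hat{L}}(\F_b)=(0,-1)$ are never in $\Pi$, while $\lambda_{\hat{L}}(\F_{S_i}\times I^1)=(\nu_{S_i},\kappa_i)$ belongs to $\Pi$ if and only if $\kappa_i=0$, which by~\eqref{eqLambdaDef} is equivalent to $v_{S_i}\in\hat{L}'$. Therefore the number of characteristic values in $\Pi$ at the chosen vertex equals $\#\{i:v_{S_i}\in\hat{L}'\}$, and I just need to show this number is strictly less than $n-1$.

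Suppose for contradiction that $v_{S_1},\ldots,v_{S_{n-1}}$ all lie in $\hat{L}'$. Under the embedding $\hat{L}'\hookrightarrow(\dd\Delta^{n-1})'$ of Construction~\ref{conCombAlex}, the vertex $v_{S_i}$ is identified with the simplex $[n]\setminus S_i\in\hat{L}$. Since $S_1\subsetneq\cdots\subsetneq S_{n-1}$, the complements form a strictly decreasing nested chain $[n]\setminus S_1\supsetneq\cdots\supsetneq[n]\setminus S_{n-1}$ of simplices of $\hat{L}$, so $\{v_{S_1},\ldots,v_{S_{n-1}}\}$ spans an $(n-2)$-simplex of $\hat{L}'$. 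This contradicts $\dim\hat{L}'=\dim\hat{L}\leqslant n-3$ from~\eqref{eqDimLhat}. The only mildly delicate point is lining up the conventions of the combinatorial Alexander-duality embedding with the sign convention of $\kappa_i$ in~\eqref{eqLambdaDef}; once that bookkeeping is done, the dimension bound~\eqref{eqDimLhat} gives the claim immediately, so I do not anticipate any real obstacle.
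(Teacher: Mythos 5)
Your proposal is correct and follows the same route as the paper: invoke Lemma~\ref{lemNonIsolatedVertex}, observe that the only characteristic values lying in $\Pi$ are those of side facets indexed by vertices of $\hat{L}'$, and derive a contradiction with the dimension bound~\eqref{eqDimLhat}. You simply spell out more explicitly (via the nested chain $S_1\subsetneq\cdots\subsetneq S_{n-1}$ giving an $(n-2)$-simplex of $\hat{L}'$) the step the paper states as ``then we get $\dim\hat{L}'\geqslant n-2$,'' which is fine.
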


\begin{proof}
According to Lemma~\ref{lemNonIsolatedVertex}, we should check that there do not exist $n-1$ intersecting facets $\F_1,\ldots,\F_{n-1}$ of $P$ such that $\lambda(\F_1),\ldots,\lambda(\F_{n-1})\in\Pi$. Assume that there exists such $(n-1)$-tuple. By construction, the only facets $\F$ with $\lambda(\F)\in\Pi$ are the side facets whose indices lie in $\hat{L}'$. Then we get $\dim\hat{L}'\geqslant n-2$ which contradicts \eqref{eqDimLhat}.
\end{proof}

\begin{clai}
The induced action of $T^{n-1}$ on the quasitoric manifold $X_{\hat{L}}^{2n}$ has connected stabilizers.
\end{clai}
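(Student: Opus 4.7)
The plan is to invoke Lemma~\ref{lemConnectedStabilizers}, which reduces the claim to the following purely lattice-theoretic statement: for every collection of facets $\F_{i_1},\ldots,\F_{i_k}$ of $P^n=\Pe^{n-1}\times I^1$ with nonempty intersection, the abelian group $\Pi/(G\cap\Pi)$ is torsion-free, where $G=\langle\lambda_{\hat{L}}(\F_{i_1}),\ldots,\lambda_{\hat{L}}(\F_{i_k})\rangle\subseteq N^{n-1}\times\Zo$.

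The preceding claim established that $\lambda_{\hat{L}}$ is a characteristic function, i.e.\ satisfies the $(*)$-condition: at every vertex $v\in P^n$ the $n$ characteristic vectors of the facets incident to $v$ form a $\Zo$-basis of $N^{n-1}\times\Zo$. Since $P^n$ is a simple polytope, any nonempty intersection of facets contains at least one vertex, so the generators of $G$ form a subset of some basis of $N^{n-1}\times\Zo$. It follows that $(N^{n-1}\times\Zo)/G$ is a free abelian group (in fact of rank $n-k$).

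To conclude, I would apply the second isomorphism theorem: $\Pi/(G\cap\Pi)\cong(\Pi+G)/G$, and the right-hand side is a subgroup of $(N^{n-1}\times\Zo)/G$. Being a subgroup of a free abelian group, it is torsion-free, which is exactly the criterion of Lemma~\ref{lemConnectedStabilizers}.

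I do not expect a real obstacle here; the argument is essentially formal and uses nothing about $\lambda_{\hat{L}}$ beyond the $(*)$-condition already verified. The only thing worth flagging is that one should resist the temptation to compute $G\cap\Pi$ explicitly from \eqref{eqLambdaDef} (by splitting into cases according to whether each $\kappa_i$ equals $0$ or $1$, and according to whether a base facet $\F_a$ or $\F_b$ belongs to the intersecting collection), since the abstract embedding $\Pi/(G\cap\Pi)\hookrightarrow(N^{n-1}\times\Zo)/G$ bypasses this case analysis entirely.
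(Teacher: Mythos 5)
Your lattice algebra is correct, but the argument proves too much, and that is the gap. Nothing in your proof uses the specific function $\lambda_{\hat{L}}$ or the specific subtorus beyond the $(*)$-condition, so the same reasoning would show that the restriction of \emph{any} quasitoric $T^n$-action to \emph{any} corank-one subtorus has connected stabilizers. That is false: take $\CP^2$ with the standard action $(t_1,t_2)\cdot[z_0:z_1:z_2]=[z_0:t_1z_1:t_2z_2]$ (characteristic values $e_1,e_2,-e_1-e_2$) and the subtorus $T^1=\{(s,s^2)\}$, i.e.\ $\Pi=\langle e_1+2e_2\rangle$. A point $[z_0:0:z_2]$ with $z_0z_2\neq 0$ lies over the relative interior of the facet with characteristic value $e_1$, and its $T^1$-stabilizer is $\{s\mid s^2=1\}\cong\Zo/2$, disconnected; yet $\Pi/(\langle e_1\rangle\cap\Pi)=\Pi\cong\Zo$ is torsion-free, and your embedding argument ``verifies'' this case just as happily. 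The explanation: the group you compute, $(\Pi+G)/G\subseteq N/G$, is \emph{always} torsion-free under $(*)$, so it cannot detect disconnectedness. What does detect it is the cokernel in the exact sequence $0\to(\Pi+G)/G\to N/G\to N/(G+\Pi)\to 0$. Indeed, the $T^{n-1}$-stabilizer of a point over the relative interior of $F=\F_{i_1}\cap\cdots\cap\F_{i_k}$ is the kernel of the homomorphism $T_F\to T^n/T^{n-1}\cong T^1$ induced by $p_*$, and this kernel is connected precisely when $p_*(G)$ equals $0$ or all of $\Zo$, equivalently when $N/(G+\Pi)\cong\Zo/p_*(G)$ is torsion-free. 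So the condition that must be checked involves the sum $G+\Pi$, not the intersection you used; as you (literally) read the criterion of Lemma~\ref{lemConnectedStabilizers}, it is vacuous, which is exactly why your proof could avoid $\lambda_{\hat{L}}$ entirely.

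This is where the case analysis you deliberately skipped is the whole content, and it is what the paper's one-line proof does. By \eqref{eqLambdaDef}, every characteristic value of $\lambda_{\hat{L}}$ has last coordinate $0$ or $\pm1$: side facets give $0$ (special) or $1$, and $\F_a,\F_b$ give $\pm1$. Hence, for any intersecting collection, either all its facets are special, so $G\subseteq\Pi$, $T_F\subseteq T^{n-1}$, and the stabilizer is the torus $T_F$ itself; or some facet has last coordinate $\pm1$, so $p_*(G)=\Zo$ and the kernel of $T_F\to T^1$ is connected. This is what the paper means by normalizing all last coordinates to zero by subtracting $\lambda_{\hat{L}}(\F_a)=(0,1)$ or adding $\lambda_{\hat{L}}(\F_b)=(0,-1)$. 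An argument that does not use this special form of the last coordinates cannot establish the claim.
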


\begin{proof}
A direct check using Lemma~\ref{lemConnectedStabilizers} and the possibility to make the second coordinate of all characteristic vectors zero by subtracting $\lambda_{\hat{L}}(\F_a)=(0,1)$ or adding $\lambda_{\hat{L}}(\F_b)=(0,-1)$.
\end{proof}

\begin{clai}
For the induced action of $T^{n-1}$ on the quasitoric manifold $X_{\hat{L}}^{2n}$, we have $\Kspec=\hat{L}'\subset \Sigma(\dd\Delta^{n-1})'$.
\end{clai}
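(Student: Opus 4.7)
The plan is to unfold the definitions on both sides and verify that $\Kspec$ and $\hat{L}'$ have the same vertex set and the same set of simplices, when both are viewed as subcomplexes of $\Sigma(\dd\Delta^{n-1})'$.

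First, I would determine the special facets of $P^n=\Pe^{n-1}\times I^1$ with respect to $\Pi=N^{n-1}\times\{0\}$. The two bases $\F_a,\F_b$ have characteristic values $(0,\pm 1)$ with nonzero last coordinate, so they are not special. For a side facet $\F_S\times I^1$, formula \eqref{eqLambdaDef} says $\lambda_{\hat{L}}(\F_S\times I^1)\in\Pi$ exactly when $v_S\in\hat{L}'$. Unwinding the embedding $\hat{L}'\hookrightarrow(\dd\Delta^{n-1})'$ (sending a simplex $I\in\hat{L}$ to the vertex $[n]\setminus I$ of $(\dd\Delta^{n-1})'$), this is equivalent to $[n]\setminus S\in\hat{L}$, i.e.\ to $S\notin L$. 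Thus the set of special vertices of $K_P$ is exactly the vertex set of $\hat{L}'$ embedded inside $(\dd\Delta^{n-1})'$.

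Second, I would promote this to a subcomplex equality by arguing that $\Kspec$ is the full subcomplex of $K_P$ on the special vertices. This is essentially the remark that follows the definition of $\Kspec$: because $P^n$ is a simple polytope, every proper face is contained in exactly the facets whose intersection it is, so specialness of a face reduces to specialness of its defining facets. Since $\F_a,\F_b$ are non-special, $\Kspec$ lies entirely in the equator $(\dd\Delta^{n-1})'\subset K_P=\Sigma(\dd\Delta^{n-1})'$, with vertex set as identified above.

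Third, I would compare simplex sets. A collection of special vertices $\{v_{S_1},\ldots,v_{S_s}\}$ belongs to $\Kspec$ iff $\F_{S_1}\cap\cdots\cap\F_{S_s}\neq\varnothing$ inside $\Pe^{n-1}$; by Construction~\ref{conPermut}, this happens iff $\{S_1,\ldots,S_s\}$ is a nested chain. On the other hand, simplices of the barycentric subdivision $\hat{L}'$ are nested chains of simplices of $\hat{L}$, which under $I\mapsto[n]\setminus I$ translate into nested chains of proper subsets $S_i\subset[n]$ with each $S_i\notin L$. These two descriptions agree, so $\Kspec=\hat{L}'$.

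I do not expect any serious obstacle. The argument is a translation between three combinatorial descriptions (special facets, intersecting permutohedron facets, nested chains in $\hat{L}$), and the mildly technical point is simply bookkeeping: one must keep straight the two conventions in play, namely the labels $v_S$ on $(\dd\Delta^{n-1})'$ (indexed by the facet subsets $S$ of the permutohedron) versus the embedding $I\mapsto[n]\setminus I$ used to place $\hat{L}'$ inside $(\dd\Delta^{n-1})'$, so that the criterion ``$v_S\in\hat{L}'$'' appearing in \eqref{eqLambdaDef} is correctly read as ``$S\notin L$''.
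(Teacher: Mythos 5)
Your proposal is correct and follows essentially the same route as the paper: identify the special facets via \eqref{eqLambdaDef} (the bases $\F_a,\F_b$ are non-special, and a side facet is special exactly when its vertex lies in the embedded $\hat{L}'$), and then conclude from the fact that both $\Kspec$ and $\hat{L}'$ are full subcomplexes on this common vertex set. Your third step, matching simplices of $\Kspec$ with nested chains coming from $\hat{L}'$, is a harmless explicit re-verification of that fullness argument and introduces no gap.
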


\begin{proof}
By definition, the vertices of $\Kspec$ are all indices $i\in\Sigma(\dd\Delta^{n-1})'$ such that $\lambda(\F_i)\in\Pi=N^{n-1}$. These are exactly the vertices of $\hat{L}'\subset (\dd\Delta^{n-1})'\subset \Sigma(\dd\Delta^{n-1})'$ according to~\eqref{eqLambdaDef}. Now, both $\Kspec$ and $\hat{L}'$ are the full subcomplexes on their vertex sets hence they coincide.
\end{proof}

The part of Theorem~\ref{thmOrbitAnything} concerning $\Hr_i(Q^{n+1})$ for $i\geqslant 3$ now follows from \eqref{eqTwoAlexanderDualities} applied to the constructed manifold~$X_{\hat{L}}^{2n}$. The vanishing of $\Hr^i(Q^{n+1})$ for $i=0,1,2$ was proved in Lemma~\ref{lemVanishingOfH012} for general restricted actions.
\end{proof}

\section{Details and generalizations}\label{secAlgGeom}

\begin{prop}\label{propProjBdl}
The quasitoric manifold $X_{\hat{L}}^{2n}$ constructed in Section~\ref{secMainProof} is a smooth projective toric variety. It is the total space of the projective line bundle over the permutohedral toric variety $X^{2n-2}_{\Pe}$.
\end{prop}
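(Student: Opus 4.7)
The plan is to exhibit $X_{\hat{L}}^{2n}$ as the projectivization $\mathbb{P}(L\oplus\mathcal{O})$ of an explicit toric line bundle $L$ on $X_{\Pe}^{2n-2}$, by building the bundle projection directly from the prism structure of $P^n$ and then matching fans.

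First I would construct the projection $\pi\colon X_{\hat{L}}^{2n}\to X_{\Pe}^{2n-2}$. The projection $P^n=\Pe^{n-1}\times I^1\to \Pe^{n-1}$ onto the first factor, together with the coordinate quotient $T^n\to T^{n-1}$ whose kernel is the circle spanned by $\lambda_{\hat{L}}(\F_a)=(0,1)$, is compatible with the characteristic data: on a side facet the first coordinate of $\lambda_{\hat{L}}(\F_S\times I^1)=(\nu_S,\epsilon_S)$ is precisely the permutohedral characteristic value $\nu_S$, while the two bases $\F_a,\F_b$ project onto the whole of $\Pe^{n-1}$ and carry the generators $(0,\pm 1)$ of the collapsed circle. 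By the reduced-space model~\eqref{eqReducedSpaceQToric}, this data descends to a well-defined equivariant smooth map $\pi$.

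Next I would check that $\pi$ is a $\CP^1$-bundle carrying two disjoint toric sections. Over the $T^{n-1}$-orbit of any point of $\Pe^{n-1}$, the preimage in the reduced-space model is $(I^1\times T^1)/{\sim}$ in which the circle $T^1$ is collapsed only at the two endpoints, via $\lambda_{\hat{L}}(\F_a)$ and $\lambda_{\hat{L}}(\F_b)$, which is $\CP^1$; the preimages of $\F_a$ and $\F_b$ then supply disjoint $T$-invariant sections $s_0,s_\infty$ of $\pi$. By the standard classification of smooth toric $\CP^1$-bundles with two disjoint sections, this forces $X_{\hat{L}}^{2n}\cong \mathbb{P}(L\oplus\mathcal{O})$ for a toric line bundle $L$ on $X_{\Pe}^{2n-2}$, namely the normal bundle of $s_0$.

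Finally I would identify $L$ explicitly. The integers $\epsilon_S\in\{0,1\}$ from \eqref{eqLambdaDef} assemble into a toric divisor $D_{\hat{L}}=\sum_S\epsilon_S D_S$ on $X_{\Pe}^{2n-2}$, and the fan of $\mathbb{P}(\mathcal{O}(D_{\hat{L}})\oplus\mathcal{O})$ is exactly the fan with rays $(\nu_S,\epsilon_S)$ and $(0,\pm 1)$ and prismatic maximal cones over those of the permutohedral fan; this coincides with the normal fan of $(P^n,\lambda_{\hat{L}})$, giving $X_{\hat{L}}^{2n}\cong \mathbb{P}(\mathcal{O}(D_{\hat{L}})\oplus\mathcal{O})$. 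Smoothness and projectivity are then automatic from those of $X_{\Pe}^{2n-2}$. The main obstacle is the precise bookkeeping identifying $\epsilon_S$ with the value of the support function of $D_{\hat{L}}$ on the ray $\nu_S$: this is a routine sign-and-shift verification, but it must be carried out carefully because the side-facet characteristic vectors in \eqref{eqLambdaDef} are defined only up to addition of multiples of $(0,\pm 1)$, an ambiguity reflecting precisely the choice of trivialising section in $\mathbb{P}(L\oplus\mathcal{O})$.
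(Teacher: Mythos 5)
Your proposal is correct and in essence the paper's own argument: the decisive step in both is matching the fan with rays $(\nu_S,\epsilon_S)$ and $(0,\pm 1)$ and prismatic maximal cones against the fan of a projectivized toric line bundle over $X^{2n-2}_{\Pe}$ via \cite[Prop.~7.3.3]{CLH}, and your preliminary construction of the projection $\pi$ and the two invariant sections out of the prism $P^n=\Pe^{n-1}\times I^1$ is a helpful but inessential repackaging of the same fan data. One bookkeeping note in your favour: your divisor $D_{\hat{L}}=\sum_S\epsilon_S D_S$ is supported on the side facets \emph{not} indexed by $\hat{L}'$, which is exactly what the heights in \eqref{eqLambdaDef} dictate, whereas the paper writes $c_1(\xi_{\hat{L}'})=\sum_{i\in\hat{L}'}v_i$ (supported on $\hat{L}'$); since $\mathbb{P}(\xi\oplus\underline{\Co})$ depends on $c_1(\xi)$ only up to sign, this does not affect the proposition, but your normalization is the one that matches the fan directly, and your closing remark about the shift ambiguity correctly accounts for the remaining freedom.
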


\begin{proof}
Indeed, we start with a permutohedral variety $X_{\Pe}^{2n-2}$ and consider the line bundle $\xi_{\hat{L}}$ whose first Chern class, or Cartier divisor, is supported on the subcomplex $\hat{L}'$, that is
\[
c_1(\xi_{\hat{L}'})=\sum_{i\in \hat{L}'\subset(\dd\Delta^{n-1})'}v_i\in H^2(X^{2n-2}_{\Pe}),
\]
where $v_i$ are the standard generators, or divisors, corresponding to the facets of $\Pe^{n-1}$. Consider the $\CP^1$-bundle over $X^{2n-2}_{\Pe}$ given by the projectivisation
\begin{equation}\label{eqProjLineBdl}
\mathbb{P}(\xi_{\hat{L}'}\oplus\underline{\Co}),
\end{equation}
where $\underline{\Co}$ denotes the trivial line bundle over $X^{2n-2}_{\Pe}$. The total space of the projective line bundle \eqref{eqProjLineBdl} is a smooth projective toric variety (see \cite[Sect.7.3]{CLH}). Fans of toric varieties given by projectivizations  of sums of line bundles are described in detail in \cite[Prop.7.3.3]{CLH}. In our case, this construction gives the following: the moment polytope of $\mathbb{P}(\xi_{\hat{L}'}\oplus\underline{\Co})$ combinatorially coincides with $\Pe^{n-1}\times I^1$, and the 1-dimensional cones of a fan are generated by the vectors $\lambda_{\hat{L}}(\F_i)$, defined by \eqref{eqLambdaDef}. Hence $\mathbb{P}(\xi_{\hat{L}'}\oplus\underline{\Co})$ is equivariantly homeomorphic to $X_{\hat{L}}^{2n}$.
\end{proof}

In the main part of the paper, we have concentrated on homological properties of manifolds with torus actions and their orbit spaces. However, it should be mentioned that Theorem~\ref{thmOrbitAnything} can be strengthened as follows.

\begin{prop}\label{propHomotopyTriple}
Let $L$ be a finite simplicial complex and the quasitoric manifold $X_{\hat{L}}^{2n}$ with the action of $T^{n-1}$ be as in Section~\ref{secMainProof}. Then $X_{\hat{L}}^{2n}/T^{n-1}$ is homotopy equivalent to~$\Sigma^3L$.
\end{prop}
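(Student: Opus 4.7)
The plan is to show that $Q^{n+1}$ is the double suspension of $\Ynon$, which is itself the suspension of a subspace $A\subseteq\dd\Pe^{n-1}$ homotopy equivalent to $L$.

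First, by Lemma~\ref{lemBlowNonSpecial} one has $Q^{n+1}\simeq\Znon$, and I will view $\Znon=(\Ynon\times D^2)\cup_{\Ynon\times S^1}(P^n\times S^1)$ as a homotopy pushout along closed cofibrations. Collapsing the contractible factors $P^n=\Pe^{n-1}\times I^1$ and $D^2$ to points reduces this pushout, up to homotopy, to the unreduced join $\Ynon*S^1$. Since $\Ynon$ is a path-connected CW complex (it contains $\F_a$, to which all other components attach along the boundary), the standard identity $A*S^1\simeq\Sigma^2 A$ yields $Q^{n+1}\simeq\Sigma^2\Ynon$.

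Next, I will identify $\Ynon$ explicitly. From the definition~\eqref{eqLambdaDef} and the vertex correspondence $I\in\hat L\leftrightarrow[n]\setminus I$ of Construction~\ref{conCombAlex}, the special facets of $P^n$ are exactly the side facets $\F_S\times I^1$ with $S\notin L$. Hence $\F_a$, $\F_b$, and $\F_S\times I^1$ for $S\in L$ are non-special, and a direct check gives
\[
\Ynon=(\Pe^{n-1}\times\{0,1\})\cup(A\times I^1),\qquad A:=\bigcup_{\varnothing\neq S\in L}\F_S\subseteq\dd\Pe^{n-1}.
\]
This presents $\Ynon$ as the double mapping cylinder of the inclusion $A\hookrightarrow\Pe^{n-1}$ with itself. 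Since $\Pe^{n-1}$ is contractible, replacing both copies with a point turns the double mapping cylinder into $\Sigma A$, so $\Ynon\simeq\Sigma A$.

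The remaining step is to show $A\simeq L$, for which I will apply the nerve lemma to the closed cover $\{\F_S:S\in L\setminus\{\varnothing\}\}$ of $A$: each facet $\F_S$ is a contractible polytope, and an intersection $\F_{S_1}\cap\cdots\cap\F_{S_k}$ is either empty or a (contractible) face of $\Pe^{n-1}$, nonempty precisely when $\{S_1,\ldots,S_k\}$ is a nested family. The nerve is therefore the simplicial complex whose simplices are nested families of non-empty simplices of $L$, which is the barycentric subdivision $L'$. Hence $A\simeq L'\cong L$, and chaining the three steps yields $Q^{n+1}\simeq\Sigma^2\Ynon\simeq\Sigma^3 A\simeq\Sigma^3 L$.

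The hard part will be the combinatorial identification of $A$ in the middle step: one must carefully unwind the definition of $\lambda_{\hat L}$ together with the vertex bijection of Construction~\ref{conCombAlex} to verify that non-special side points are exactly those lying in $\F_S$ for some $S\in L$. The surrounding ingredients --- the homotopy pushout reduction, the double mapping cylinder being a suspension when the target is contractible, and the nerve lemma for a good polyhedral cover --- are standard tools that can be invoked or briefly justified.
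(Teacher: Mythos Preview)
Your proof is correct and follows the same overall arc as the paper's: both factor the equivalence as $Q^{n+1}\simeq\Znon\simeq\Sigma^2\Ynon$ followed by $\Ynon\simeq\Sigma L$. The difference lies in how the last step is justified. The paper observes that $\Ynon$ is homotopy equivalent to the complement of $\Kspec=\hat L'$ in the sphere $\dd P^n\cong\Sigma(\dd\Delta^{n-1})'$; since $\hat L'$ sits in the equatorial $(\dd\Delta^{n-1})'\cong S^{n-2}$, its complement in the suspended sphere is $\Sigma L$, and hence $\Ynon\simeq\Sigma L$. You instead exploit the product decomposition $P^n=\Pe^{n-1}\times I^1$ directly: you read off $\Ynon$ as the double mapping cylinder of $A\hookrightarrow\Pe^{n-1}$ and then invoke the nerve lemma on the facet cover of $A$ to get $A\simeq L'\cong L$. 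Your route is more explicit and avoids appealing to Alexander duality at the homotopy level, at the cost of a slightly longer combinatorial unwinding; the paper's route is terser because it reuses the duality setup already in place from Section~\ref{secMainProof}.
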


\begin{proof}
Recall that the underlying simplicial sphere of a quasitoric manifold $X_{\hat{L}}^{2n}$ has the form $\Sigma(\dd\Delta^{n-1})'$, and the special subcomplex $\Kspec=\hat{L}$ is a subcomplex of the equatorial sphere $(\dd\Delta^{n-1})'\cong S^{n-2}$. Therefore the Alexander dual of $\Kspec$ inside $\Sigma(\dd\Delta^{n-1})'\cong S^{n-1}$ is homotopy equivalent to $\Sigma L$. On the other hand, the space $\Ynon$ is homotopy equivalent to the complement of $\Kspec$ in $S^{n-1}=\dd P^n\cong\Sigma(\dd\Delta^{n-1})'$. Therefore $\Ynon\simeq\Sigma L$.

It can be seen from~\eqref{eqDefZnon}, that the subspace $\Znon$ is homotopy equivalent to $\Sigma^2\Ynon$. Hence $Q^{n+1}\simeq \Znon\simeq\Sigma^2\Ynon\simeq\Sigma^3L$.
\end{proof}

Now we prove a refinement of Theorem~\ref{thmOrbitAnything} and some other results for a more specific class of torus actions of complexity one.

\begin{defin}\label{definJGeneral}
Assume that an action of $T^{n-1}$ on a manifold $X^{2n}$ is effective and has nonempty finite set of fixed points. We say that this action is in \emph{$j$-general position} if, at each fixed point $x$, every $j$ of the tangent weights $\{\alpha_{x,1},\ldots,\alpha_{x,n}\in \Hom(T^{n-1},T^1)\}$ are linearly independent.
\end{defin}

Every action with isolated fixed points is in $1$-general position. A ``general position'' is a synonym for an ``$(n-1)$-general position''. Then we have the following refinement of Theorem~\ref{thmOrbitAnything}

\begin{thm}\label{thmJGeneralOrbit}
For any finite simplicial complex $M$, there exists a closed smooth manifold $X^{2n}$ with $H^{\odd}(X^{2n})=0$, and an action of $T^{n-1}$ on $X^{2n}$ in $j$-general position such that the orbit space $Q^{n+1}=X^{2n}/T^{n-1}$ is homotopy equivalent to $\Sigma^{j+2}M$.
\end{thm}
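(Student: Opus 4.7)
The plan is to generalize the construction used in the proof of Theorem~\ref{thmOrbitAnything} by enlarging the interval factor $I^1$ of the polytope $P^n=\Pe^{n-1}\times I^1$ to the $j$-dimensional cube $I^j=(I^1)^j$. Let $M$ be a simplicial complex on $N$ vertices; assume, without loss of generality, that $M\neq\Delta^{N-1}$ and $M$ has no ghost vertices, so that its combinatorial Alexander dual $\hat M$ satisfies $\dim\hat M\leqslant N-3$. Set $n=N+j-1$ and take $P^n=\Pe^{N-1}\times I^j$. Its facets come in two kinds: side facets $\F_S\times I^j$ for each proper nonempty $S\subset[N]$, and $2j$ cap facets $\F_{a,\ell}$, $\F_{b,\ell}$ coming from the $j$ pairs of facets of the cube. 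The dual simplicial sphere is the join $K_P=(\dd\Delta^{N-1})'*S^{j-1}\cong\Sigma^j(\dd\Delta^{N-1})'\cong S^{n-1}$, where $S^{j-1}$ is realized as the boundary of the $j$-dimensional cross-polytope with vertex set $\{a_1,b_1,\ldots,a_j,b_j\}$.

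Mimicking \eqref{eqLambdaDef}, I would define a characteristic function $\lambda_{\hat M}$ on $P^n$ with values in $\Zo^{N-1}\oplus\Zo^j\cong\Zo^n$ by setting $\lambda_{\hat M}(\F_{a,\ell})=(0,e_\ell)$, $\lambda_{\hat M}(\F_{b,\ell})=(0,-e_\ell)$ for $\ell=1,\ldots,j$, and
\[
\lambda_{\hat M}(\F_S\times I^j)=\begin{cases}(\nu_S,0),&v_S\in\hat M',\\(\nu_S,e_1),&v_S\notin\hat M',\end{cases}
\]
where $\nu_S$ is the permutohedral normal. Then I would take the subtorus $T^{n-1}\subset T^n$ corresponding to the kernel hyperplane $\Pi=\{(v,w)\in\Zo^n\mid w_1+\cdots+w_j=0\}$. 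The $(*)$-condition and connected stabilizers are verified by the same direct computation as in the Claims inside the proof of Theorem~\ref{thmOrbitAnything}: the $n$ characteristic values adjacent to any vertex reduce, after adding suitable cap vectors, to a permutohedral basis together with a basis of the cross-polytope lattice.

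The central observation is a numerical translation between the combinatorics of $\Pi$ and the definition of $j$-general position. At a fixed point $x$ with $k$ characteristic values lying in $\Pi$, dualizing gives exactly one linear relation, up to scalar, among the tangent weights $\alpha_{x,1},\ldots,\alpha_{x,n}$, whose support is the set of $n-k$ indices whose characteristic values lie \emph{outside} $\Pi$. Therefore every $j$ of the weights are linearly independent iff $k\leqslant n-j-1$. In our construction only side facets with $v_S\in\hat M'$ are special, and cap facets are never special since the last-block sum of $(0,\pm e_\ell)$ equals $\pm 1\neq 0$. Hence the number of special facets adjacent to any vertex is at most $\dim\hat M'+1\leqslant N-2=n-j-1$, so the induced action is in $j$-general position and, in particular, has isolated fixed points. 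An analogous direct argument shows that $\Kspec$ is the full subcomplex $\hat M'$ of $K_P$, sitting inside the equatorial factor $(\dd\Delta^{N-1})'$ of the join $K_P\cong\Sigma^j(\dd\Delta^{N-1})'$.

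Finally, to identify $Q^{n+1}\simeq\Sigma^{j+2}M$, I would iterate the argument of Proposition~\ref{propHomotopyTriple}. Since $\Kspec=\hat M'$ is contained in the equatorial sphere $(\dd\Delta^{N-1})'\cong S^{N-2}$ inside $\dd P^n\cong\Sigma^j(\dd\Delta^{N-1})'\cong S^{n-1}$, the complement of $\Kspec$ in the $(n-1)$-sphere is homotopy equivalent to the $j$-fold suspension of the complement of $\hat M'$ in $(\dd\Delta^{N-1})'$; this uses the standard fact that $\Sigma X\setminus A\simeq\Sigma(X\setminus A)$ for a subcomplex $A\subset X$, applied $j$ times. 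By combinatorial Alexander duality this latter complement is homotopy equivalent to $M$, giving $\Ynon\simeq\Sigma^j M$, and the suspension identification $\Znon\simeq\Sigma^2\Ynon$ from Proposition~\ref{propHomotopyTriple} then yields $Q^{n+1}\simeq\Znon\simeq\Sigma^{j+2}M$. The principal point requiring care is the dictionary between ``number of characteristic values in $\Pi$ at a vertex'' and ``$j$-general position''; once this is established, the construction is a routine extension of the one carried out in Section~\ref{secMainProof}, and the homotopy type calculation closely parallels the one in Section~\ref{secAlgGeom}.
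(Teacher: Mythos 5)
Your argument is correct, but it takes a genuinely different route from the paper's proof. The paper does not modify the construction of Section~\ref{secMainProof} at all: it keeps the polytope $\Pe^{n-1}\times I^1$ and the manifold $X_{\hat{L}}^{2n}$, and instead pre-processes the input complex, introducing the operation $s(K)=\Cone K\cup\Delta_V^{m-1}$ of \eqref{eqSKoperation}, which is homotopy equivalent to $\Sigma K$ while raising neighborliness by one; setting $L=s^{j-1}(M)\simeq\Sigma^{j-1}M$, it invokes Lemma~\ref{lemJgeneralForRestriction} ($j$-general position $\Leftrightarrow\dim\Kspec\leqslant n-2-j\Leftrightarrow L$ is $j$-neighborly) and Proposition~\ref{propHomotopyTriple} to get $Q^{n+1}\simeq\Sigma^3L\simeq\Sigma^{j+2}M$. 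You instead modify the geometry: you thicken the interval to $I^j$, take $P^n=\Pe^{N-1}\times I^j$ and the subtorus with $\Pi=\{w_1+\cdots+w_j=0\}$, so that $\Kspec=\hat{M}'$ sits in a codimension-$j$ equatorial sphere of $\dd P^n$, giving $\Ynon\simeq\Sigma^jM$ and $Q^{n+1}\simeq\Sigma^2\Ynon\simeq\Sigma^{j+2}M$. Your ``central observation'' is exactly the paper's Lemma~\ref{lemJGeneralForRestrictionLoc} (a single relation among the restricted weights, supported on the indices whose characteristic values lie outside $\Pi$), and your count $k\leqslant\dim\hat{M}'+1\leqslant N-2=n-j-1$ at each vertex is right; the $(*)$-condition and connected-stabilizer checks do go through as in the $j=1$ case, because every characteristic value you assign has last-block coordinate sum $0$ or $\pm1$. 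The trade-off: the paper's route is shorter, since no new characteristic pair needs to be verified, whereas yours keeps $M$ itself as the combinatorial input, dispenses with the auxiliary operation $s(\cdot)$, and makes the extra suspensions visible geometrically; in fact your characteristic data split as a product, so your manifold is equivariantly the Section~\ref{secMainProof} manifold built from $M$ times $(\CP^1)^{j-1}$ (hence smooth, projective toric, with $H^{\odd}=0$), and the two constructions even produce manifolds of the same dimension, since $s^{j-1}(M)$ has $N+j-1$ vertices.
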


The construction and the arguments remain essentially the same as in Theorem~\ref{thmOrbitAnything} and Proposition~\ref{propHomotopyTriple}. Let us prove several lemmas generalizing the previous arguments. 

\begin{lem}\label{lemJGeneralForRestrictionLoc}
Let $x$ be a fixed point of the $T^n$-action on a quasitoric manifold $X^{2n}$. Then $x$ is not in $j$-general position for the restricted action of $T^{n-1}\subset T^n$ if and only if some $n-j$ of its characteristic values at $x$ lie in $\Pi$.
\end{lem}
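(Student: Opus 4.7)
My plan is to treat the statement as a direct generalization of Lemma~\ref{lemNonIsolatedVertex} (which is the case $j=1$), built on the same dual pairing $\langle \lambda_i,\wh{\alpha}_j\rangle=\delta_{ij}$ together with the two short exact sequences of lattices associated with the inclusion $T^{n-1}\subset T^n$. The conceptual heart is to translate linear (in)dependence of $j$ restricted weights into a statement about how $p^*(1)$ is expressed in the basis dual to the characteristic values.

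First I would rephrase $j$-general position at $x$ over $\Qo$. The restricted weights are $\alpha_{x,k}=i^*(\wh{\alpha}_k)$, and $\ker(i^*\otimes\Qo)$ is the line spanned by $p^*(1)$. Since $\wh{\alpha}_1,\ldots,\wh{\alpha}_n$ form a basis of $N^*$, any $j$ of them are themselves $\Qo$-linearly independent. Hence the $j$ restricted weights $\alpha_{x,i_1},\ldots,\alpha_{x,i_j}$ fail to be linearly independent if and only if $p^*(1)$ belongs to the $\Qo$-span of $\wh{\alpha}_{i_1},\ldots,\wh{\alpha}_{i_j}$.

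Second, I would expand $p^*(1)$ in the basis $\{\wh{\alpha}_k\}$. Pairing against $\lambda_k$ and using $\langle \lambda_k,p^*(1)\rangle=\langle p_*(\lambda_k),1\rangle=p_*(\lambda_k)$ gives
\[
p^*(1)=\sum_{k=1}^n p_*(\lambda_k)\,\wh{\alpha}_k.
\]
Therefore $p^*(1)$ lies in the $\Qo$-span of $\wh{\alpha}_{i_1},\ldots,\wh{\alpha}_{i_j}$ exactly when $p_*(\lambda_k)=0$ for every $k\notin\{i_1,\ldots,i_j\}$, i.e.\ precisely when the $n-j$ characteristic values indexed by the complement of $\{i_1,\ldots,i_j\}$ lie in $\Pi=\ker p_*$. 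Combining the two steps yields the asserted equivalence.

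I do not foresee any real obstacle: both directions of the equivalence fall out immediately from the expansion above, and the argument specializes verbatim to Lemma~\ref{lemNonIsolatedVertex} when $j=1$ (a single weight vanishing corresponds to $n-1$ characteristic values lying in $\Pi$). The only minor subtlety worth being explicit about is that linear (in)dependence has to be tested over $\Qo$ rather than $\Zo$, so that $\ker(i^*\otimes\Qo)$ is literally the one-dimensional line $\Qo\cdot p^*(1)$; after this the complementary-subset bookkeeping that turns ``$j$ weights dependent'' into ``$n-j$ characteristic values in $\Pi$'' is automatic.
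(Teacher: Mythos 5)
Your argument is correct and is essentially the paper's own: the paper proves the case $j=1$ (Lemma~\ref{lemNonIsolatedVertex}) via the dual bases $\langle\lambda_i,\wh{\alpha}_k\rangle=\delta_{ik}$ and the exact sequences for $i^*,p^*,p_*$, and declares the general case ``completely similar''; your expansion $p^*(1)=\sum_k p_*(\lambda_k)\,\wh{\alpha}_k$ is just a systematic write-up of that same duality computation. No gaps; the only cosmetic remark is that linear independence over $\Qo$ and over $\Zo$ coincide here since the weights lie in a free abelian group, so the ``subtlety'' you flag is harmless.
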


This Lemma and its proof is completely similar to Lemma~\ref{lemNonIsolatedVertex}. Lemma~\ref{lemJGeneralForRestrictionLoc} implies the generalization of~\eqref{eqDimLhat}.

\begin{lem}\label{lemJgeneralForRestriction}
The action of $T^{n-1}\subset T^n$ on a quasitoric manifold $X^{2n}$ is in $j$-general position if and only if $\dim \Kspec\leqslant n-2-j$.
\end{lem}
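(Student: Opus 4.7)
The plan is to reduce the statement to the local criterion provided by Lemma~\ref{lemJGeneralForRestrictionLoc} and then translate its conclusion into a dimensional statement about the simplicial complex $\Kspec$.

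First, I would unfold what it means for the action to fail $j$-general position. By Definition~\ref{definJGeneral}, this means there exists a fixed point $x$ at which $j$ of the tangent weights $i^{*}\wh{\alpha}_{k_1},\dots,i^{*}\wh{\alpha}_{k_j}$ are linearly dependent. By Lemma~\ref{lemJGeneralForRestrictionLoc}, this is equivalent to the existence of $n-j$ characteristic values $\lambda_{i_1},\dots,\lambda_{i_{n-j}}$ at $x$ that lie in $\Pi$, i.e.\ $n-j$ of the facets of $P^n$ adjacent to $x$ are special.

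Next, I would pass to the combinatorics of $\Kspec$. The vertex $x$ is the intersection of exactly $n$ facets of $P^n$ corresponding to a maximal simplex of the nerve $K_P$. Having $n-j$ of them special means that the corresponding $n-j$ indices form a face of $\Kspec$ (they are pairwise intersecting facets, all special, hence lie in $\Kspec$ by its definition as a full subcomplex). That face has $n-j$ vertices, i.e.\ dimension $n-j-1$. Thus failure of $j$-general position produces a simplex of dimension at least $n-j-1$ in $\Kspec$, so $\dim\Kspec\geqslant n-1-j$.

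For the converse, suppose $\{i_1,\dots,i_{n-j}\}\in\Kspec$. By the definition of $\Kspec$, the facets $\F_{i_1},\dots,\F_{i_{n-j}}$ share a nonempty intersection; since $P^n$ is a simple polytope this intersection is a face of codimension $n-j$, hence of dimension $j\geqslant 0$. Pick any vertex $x$ of this face. Then $n-j$ of the $n$ facets adjacent to $x$ are special, so $n-j$ of the characteristic values at $x$ lie in $\Pi$, and Lemma~\ref{lemJGeneralForRestrictionLoc} yields that the action fails $j$-general position at $x$.

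Combining the two directions, the action is in $j$-general position if and only if $\Kspec$ contains no simplex of dimension $\geqslant n-j-1$, i.e.\ $\dim\Kspec\leqslant n-2-j$, as claimed. The argument is essentially bookkeeping once Lemma~\ref{lemJGeneralForRestrictionLoc} is in hand; the only subtle point to make explicit is that every face of $P^n$ occurring as an intersection of facets contains a vertex of $P^n$, so that the global condition on $\Kspec$ really does reduce to a condition checkable at fixed points.
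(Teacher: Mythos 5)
Your argument is correct and is essentially the paper's intended one: the paper deduces this lemma directly from the local criterion of Lemma~\ref{lemJGeneralForRestrictionLoc} (it gives no further proof), and your write-up simply makes explicit the routine translation between collections of $n-j$ special facets meeting at a vertex and $(n-j-1)$-dimensional faces of $\Kspec$, using that $\Kspec$ is a full subcomplex and that every nonempty face of the simple polytope contains a vertex.
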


Next we have the extension of Lemma~\ref{lemVanishingOfH012}

\begin{lem}
For a restricted action of $T^{n-1}$ on a quasitoric manifold $X^{2n}$ in $j$-general position, there holds $\Hr_i(Q^{n+1})=0$ for $i=0,1,\ldots,j+1$.
\end{lem}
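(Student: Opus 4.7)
The plan is to mimic the proof of Lemma~\ref{lemVanishingOfH012} almost verbatim, replacing the dimension bound on $\Kspec$ by the sharper one provided by Lemma~\ref{lemJgeneralForRestriction}, and then feeding it into the Alexander duality isomorphism from Lemma~\ref{lemMainAlexanderDual}.

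First, I would invoke Lemma~\ref{lemJgeneralForRestriction} to conclude that the hypothesis of $j$-general position gives $\dim \Kspec \leqslant n-2-j$. Since a simplicial complex of dimension $d$ has vanishing reduced cohomology in degrees strictly above $d$, this yields $\Hr^{k}(\Kspec)=0$ for all $k \geqslant n-1-j$.

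Next, I would use the Alexander duality isomorphism $\Hr_i(Q^{n+1}) \cong \Hr^{n-i}(\Kspec)$ provided by Lemma~\ref{lemMainAlexanderDual}. For $i \in \{0,1,\ldots,j+1\}$ we have $n-i \geqslant n-j-1$, so the corresponding reduced cohomology of $\Kspec$ vanishes. Combining the two, $\Hr_i(Q^{n+1})=0$ for $i=0,1,\ldots,j+1$, which is exactly what we need.

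I do not anticipate a real obstacle: the only subtlety is making sure the bound $\dim \Kspec \leqslant n-2-j$ is correctly aligned with the cohomological degree shift in Alexander duality (sphere of dimension $n+1$, hence duality shift $n-i$), and that the $i=0$ case is covered since reduced cohomology is used throughout. Both are routine once Lemma~\ref{lemJgeneralForRestriction} is in place, so the entire argument fits in a few lines and is a direct generalization of Lemma~\ref{lemVanishingOfH012} (which corresponds to $j=1$).
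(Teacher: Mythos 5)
Your proposal is correct and follows essentially the same route as the paper: apply Lemma~\ref{lemJgeneralForRestriction} to get $\dim \Kspec\leqslant n-2-j$, then conclude via the Alexander duality isomorphism $\Hr_i(Q^{n+1})\cong\Hr^{n-i}(\Kspec)$ of Lemma~\ref{lemMainAlexanderDual} that these groups vanish for dimensional reasons when $i\leqslant j+1$. The degree bookkeeping ($n-i\geqslant n-1-j>\dim\Kspec$) is exactly right, so nothing is missing.
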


\begin{proof}
The Alexander duality given by Lemma~\ref{lemMainAlexanderDual} implies $\Hr_i(Q^{n+1})\cong \Hr^{n-i}(\Kspec)$. The homology groups $\Hr^{n-i}(\Kspec)$ vanish by dimensional reasons for $i=0,1,\ldots,j+1$ according to Lemma~\ref{lemJgeneralForRestriction}. This implies the statement.
\end{proof}

Recall that a simplicial complex $L$ is called \emph{$j$-neighborly} if any $j$ of its vertices form a simplex (that is $L$ contains the $(j-1)$-skeleton of a simplex). The condition $1$-neighborly simply means that $L$ does not have ghost vertices. This condition is assumed to hold for~$L$.

\begin{proof}[Proof of Theorem~\ref{thmJGeneralOrbit}]
Assume that $L$ has $n$ vertices. It is easily seen from the definition of combinatorial Alexander duality that $\dim \hat{L}\leqslant n-2-j$ if and only if $L$ is $j$-neighborly. So far, Theorem~\ref{thmJGeneralOrbit} can be deduced from Proposition~\ref{propHomotopyTriple} and Lemma~\ref{lemJgeneralForRestriction} if we take $X^{2n}=X_{\hat{L}}^{2n}$ for a simplicial complex $L$ satisfying two properties:
\begin{enumerate}
  \item $L$ is $j$-neighborly;
  \item $L$ is homotopy equivalent to the $(j-1)$-fold suspension $\Sigma^{j-1}M$.
\end{enumerate}
To prove the existence of a simplicial complex with these properties, we introduce an operation $s(\cdot)$ on simplicial complexes which homotopically acts as the suspension but raises the degree of neighborliness. Let $K$ be a simplicial complex on the vertex set $V$, $|V|=m$. Consider the simplicial complex $s(K)$ on $m+1$ vertices given by
\begin{equation}\label{eqSKoperation}
s(K)=\Cone K\cup \Delta_V^{m-1}
\end{equation}
(we take the cone over $K$ and add a simplex on the whole set $V$). Then
\begin{enumerate}
  \item If $K$ is $r$-neighborly, then $s(K)$ is $(r+1)$-neighborly. 
  \item $s(K)$ is homotopy equivalent to the suspension $\Sigma K$. Indeed, by collapsing the simplex $\Delta_V^{m-1}$ in~\eqref{eqSKoperation} to a point, we get the suspension $\Sigma K$.
\end{enumerate}
The complex $L$ can be obtained by applying the operation $s(\cdot)$ to $M$ $j-1$ times: $L=s^{j-1}(M)$.
Theorem~\ref{thmJGeneralOrbit} follows by taking $X^{2n}=X_{\hat{L}}^{2n}$ for $L=s^{j-1}(M)\simeq\Sigma^{j-1}M$ and applying Proposition~\ref{propHomotopyTriple}. 
\end{proof}

\section*{Acknowledgements}
The authors thank Mikiya Masuda for sharing his idea to study torus actions in $j$-general position and for his hospitality during our visit to Osaka in 2018. 

\end{document}